\documentclass[pdflatex,sn-mathphys-num]{sn-jnl}

\usepackage{graphicx}%
\usepackage{multirow}%
\usepackage{amsmath,amssymb,amsfonts}%
\usepackage{amsthm}%
\usepackage{mathrsfs}%
\usepackage[title]{appendix}%
\usepackage{xcolor}%
\usepackage{textcomp}%
\usepackage{manyfoot}%
\usepackage{booktabs}%
\usepackage{algorithm}%
\usepackage{algorithmicx}%
\usepackage{algpseudocode}%
\usepackage{listings}%

\theoremstyle{plain}
\newtheorem{theorem}{Theorem}[section]
\newtheorem{proposition}[theorem]{Proposition}
\newtheorem{lemma}[theorem]{Lemma}
\newtheorem{corollary}[theorem]{Corollary}

\theoremstyle{definition}
\newtheorem{definition}[theorem]{Definition}
\newtheorem{example}[theorem]{Example}

\theoremstyle{remark}
\newtheorem{remark}[theorem]{Remark}

\begin{document}

\title[Vector-Valued Wavelet Bases as Hilbert $\mathbb{M}_m(\mathbb{R})$-Module Bases: A Construction from Scalar Wavelets]{Vector-Valued Wavelet Bases as Hilbert $\mathbb{M}_m(\mathbb{R})$-Module Bases: A Construction from Scalar Wavelets}

\author*[1]{\fnm{Hicham} \sur{Tarif}}\email{hicham.tarif@uir.ac.ma}\equalcont{These authors contributed equally to this work.}

\author[1]{\fnm{Nadir} \sur{Maaroufi}}\email{nadir.maaroufi@uir.ac.ma}
\equalcont{These authors contributed equally to this work.}

\affil*[1]{\orgdiv{TICLab, College of Engineering and Architecture}, \orgname{International University of Rabat}, \orgaddress{ \city{Sala Al Jadida}, \postcode{11000}, \country{Morocco}}}


\abstract{Vector-valued multiscale representations are essential when signals or fields take values in $\mathbb{R}^m$ and component interactions carry meaningful information.
Most multiwavelet and super-wavelet constructions are formulated in scalar Hilbert-space settings and typically produce channelwise scalar coefficients followed by recombination.
We develop an intrinsic framework for vector-valued wavelets on $L^2(\mathbb{R}^d,\mathbb{R}^m)$ by endowing this space with a natural $\mathbb{M}_m(\mathbb{R})$-valued inner product, thereby turning it into a Hilbert $\mathbb{M}_m(\mathbb{R})$-module.
This module viewpoint yields matrix-valued coefficients that encode cross-component interactions and provides canonical reconstruction through a Parseval-type identity.
Within this setting, we introduce a constructive lifting procedure that builds separable multivariate vector-valued wavelet bases in $L^2(\mathbb{R}^d,\mathbb{R}^m)$ from scalar wavelet bases while preserving compact support, vanishing moments, and regularity.}

\keywords{vector-valued wavelets, Hilbert $C^*$-modules, $\mathbb{M}_m(\mathbb{R})$-valued inner products, multiwavelets, super-wavelets, Parseval frames, multiresolution analysis, tensor-product wavelets, compact support, vanishing moments.}


\pacs[MSC Classification]{42C40, 46L08, 46B15}

\maketitle


\section{Introduction}
\subsection{Preliminaries}\label{subsec:prelim}

Wavelets provide multiscale representations and are a standard tool in harmonic analysis and approximation theory.
Major advances in the 1980s, notably by Y.~Meyer and I.~Daubechies, led to compactly supported wavelet bases with prescribed regularity and vanishing moments, particularly efficient for approximating smooth functions.
Meyer's work also paved the way for Mallat's multiresolution analysis (MRA), which gives a systematic framework for constructing orthonormal wavelet bases in $L^2(\mathbb{R}^d)$ from the Hilbert space structure of $L^2(\mathbb{R})$ \cite{mallat1989multiresolution,mallat1999wavelet,daubechies1988orthonormal,lemarie1986ondelettes}.

The success of scalar wavelet bases motivates multiscale tools for vector-valued functions in $L^2(\mathbb{R}^d,\mathbb{R}^m)$. In many applications, vector-valued data are not merely independent channels: components are coupled and their interactions carry meaningful information.
This raises a structural question: \emph{what is the intrinsic functional-analytic framework in which vector-valued wavelet coefficients and reconstruction should be formulated?}
Componentwise processing yields only scalar coefficients, while a genuinely vector-valued transform should encode inter-component interactions and provide a canonical reconstruction acting on the full signal.

This paper addresses this question by developing a Hilbert-module viewpoint for $L^2(\mathbb{R}^d,\mathbb{R}^m)$, leading to matrix-valued coefficients and a multiresolution decomposition formulated at the module level.

\subsection{Related work and contributions}\label{subsec:related}

A natural starting point for vector-valued multiscale analysis is to view
$L^2(\mathbb{R}^d,\mathbb{R}^m)$ as the Hilbert space of $\mathbb{R}^m$-valued functions equipped with the scalar inner product
\[
\langle f,g\rangle=\int_{\mathbb{R}^d}\sum_{i=1}^{m} f_i(x)\,g_i(x)\,dx.
\]
Within this Hilbert-space viewpoint, two classical constructions have been widely used to process vector-valued data.

\paragraph{Multiwavelets}
Multiwavelets were introduced in the scalar space $L^2(\mathbb{R}^d)$ to gain flexibility in enforcing desirable properties
such as compact support, symmetry, orthogonality, and prescribed vanishing moments, by using several scaling functions and
associated wavelets \cite{chui1996study,strang1996wavelets}. They lead to matrix refinement equations already in the scalar
setting. When applied to vector-valued signals, a common implementation is componentwise (or block-diagonal): one applies a
scalar/multiwavelet transform to each channel separately and then recombines the outputs \cite{cotronei2002multiwavelet,fowler2002wavelet}.
This can provide perfect reconstruction, but the coefficient extraction is not intrinsically tied to the $\mathbb{R}^m$-geometry
of the signal. In particular, unless additional coupling mechanisms are introduced, cross-channel interactions are not encoded
at the level of the scalar pairing \cite{bhatt2007orthogonal,xia1996vector}.
\paragraph{Super-wavelets}
Super-wavelets provide wavelet bases in the direct-sum space
$L^2(\mathbb{R}^d)^{\oplus m}=L^2(\mathbb{R}^d)\oplus\cdots\oplus L^2(\mathbb{R}^d)$ by assembling several scalar wavelet
systems, first in Euclidean settings and later in broader contexts such as local fields
\cite{balan1999density,dutkay2005mra,shukla2018super,ahmad2022nonuniformsuper}. For vector-valued data, however, in their
standard direct-sum formulation these constructions have structural limitations: analysis is performed through scalar
Hilbert-space pairings on each copy of $L^2(\mathbb{R}^d)$, cross-channel interactions are not encoded intrinsically at the
coefficient level, and synthesis is obtained by assembling componentwise scalar expansions. Consequently, although
reconstruction is obtained within the direct-sum model, the analysis-synthesis mechanism is not intrinsic to the geometry
of $L^2(\mathbb{R}^d,\mathbb{R}^m)$. This differs from genuinely vector-valued transforms, where both coefficient extraction
and reconstruction are defined directly on $L^2(\mathbb{R}^d,\mathbb{R}^m)$ (see, e.g.,
\cite{bhatt2007orthogonal,xia1996vector,chen2007study,chen2008biorthogonal,chen2008biorthogonality}).

\paragraph{Hilbert-module and frame perspectives}
Hilbert $C^*$-module frames and related constructions are well established
\cite{FranckLarson2002,JakobsenLuef2020}. Their scope is largely structural: they are typically formulated at an abstract
level and do not, by themselves, identify a concrete intrinsic module model on $L^2(\mathbb{R}^d,\mathbb{R}^m)$ tailored to
vector-valued wavelet analysis, nor do they provide explicit wavelet-basis constructions that preserve classical properties
such as compact support and vanishing moments.

\paragraph{Main contributions} Our framework addresses these gaps by endowing $L^2(\mathbb{R}^d,\mathbb{R}^m)$ with a Hilbert $\mathbb{M}_m(\mathbb{R})$-module
structure and using the associated $\mathbb{M}_m(\mathbb{R})$-valued inner product to extract matrix-valued coefficients. This
makes the transform intrinsic to the vector geometry, encodes cross-component interactions through off-diagonal entries, and
yields canonical reconstruction via a Parseval-type identity (Corollary~\ref{mcr}). The
resulting coefficient architecture is equivariant under changes of basis in $\mathbb{R}^m$ (Lemma~\ref{lem:equivariance} and
Corollary~\ref{cor:invariants}. See also Examples~\ref{ex:geometry} and~\ref{ex:equivariance}).

\begin{itemize}
\item A structural module theorem (Theorem~\ref{thm:hilbert_onb_implies_parseval_module}) showing that Hilbert totality plus $\mathcal A$-orthonormality yields a Parseval $\mathcal A$-module frame, and therefore intrinsic reconstruction and energy identities for matrix-valued coefficients.
\item A constructive one-dimensional lifting theorem (Theorem~\ref{thm:vv-1d}) that converts scalar orthonormal wavelet bases into vector-valued wavelets in $L^2(\mathbb{R},\mathbb{R}^m)$ via dyadic scale grouping, while preserving compact support, regularity, and vanishing moments.
\item A multivariate extension theorem (Theorem~\ref{thm:main-vv-multi}) that builds separable vector-valued wavelet bases in $L^2(\mathbb{R}^d,\mathbb{R}^m)$ from scalar ingredients through tensorization and grouping.
\end{itemize}
\section{\texorpdfstring{$L^2(\mathbb{R}^d,\mathbb{R}^m)$}{L2(Rd,Rm)} as a Hilbert \texorpdfstring{$\mathbb{M}_m(\mathbb{R})$}{Mm(R)}-module}\label{subsec:module}

\subsection{Hilbert \texorpdfstring{$\mathbb{M}_m(\mathbb{R})$}{Mm(R)}-module and module-inner product}
Throughout the paper, $I_m$ and $0_m$ denote the identity and zero matrices in $\mathbb{M}_m(\mathbb{R})$, respectively.
We refer to \cite{lance1995hilbert,manuilov2000hilbert} for general background on Hilbert $C^*$-modules.

In this work the coefficient algebra is the finite-dimensional $*$-algebra
$\mathcal{A}=\mathbb{M}_m(\mathbb{R})$, endowed with the involution $A^*=A^\top$.
We write $\mathrm{tr}$ for the trace.
When needed, we equip $\mathbb{M}_m(\mathbb{R})$ with the Frobenius norm
$$
\|A\|_{F}:=\Big(\sum_{i,j=1}^m |a_{ij}|^2\Big)^{1/2},\qquad A=(a_{ij})\in\mathbb{M}_m(\mathbb{R}),
$$
although, since $\mathbb{M}_m(\mathbb{R})$ is finite-dimensional, all matrix norms are equivalent and the specific choice is immaterial for completeness considerations.

\begin{definition}[Hilbert module {\cite[Chapter~1]{lance1995hilbert}}]\label{def:HM}
Let $\mathcal{A}$ be a (real) $C^*$-algebra.
A \emph{pre-Hilbert $\mathcal{A}$-module} is a left $\mathcal{A}$-module $\mathcal{H}$ equipped with an $\mathcal{A}$-valued inner product $\langle\cdot,\cdot\rangle_{\mathcal{A}}:\mathcal{H}\times\mathcal{H}\to\mathcal{A}$ such that for all $x,y\in\mathcal{H}$ and $A\in\mathcal{A}$:
\begin{enumerate}[label=(\roman*)]

  \item $\langle x,x\rangle_{\mathcal{A}}\ge 0$;
  \item $\langle x,x\rangle_{\mathcal{A}}=0$ implies $x=0$;
  \item $\langle x,y\rangle_{\mathcal{A}}=\langle y,x\rangle_{\mathcal{A}}^{*}$;
  \item $\langle Ax,y\rangle_{\mathcal{A}}=A\,\langle x,y\rangle_{\mathcal{A}}$.

\end{enumerate}
The induced norm is $\|x\|_{\mathcal{A}}:=\|\langle x,x\rangle_{\mathcal{A}}\|_{\mathcal{A}}^{1/2}$.
If $\mathcal{H}$ is complete for this norm, it is called a \emph{Hilbert $\mathcal{A}$-module}.
\end{definition}

We now endow $L^2(\mathbb{R}^d,\mathbb{R}^m)$ with an $\mathbb{M}_m(\mathbb{R})$-valued inner product.
Let $\{e_i\}_{i=1}^m$ be the canonical basis of $\mathbb{R}^m$ and write $f=(f_1,\dots,f_m)^\top$.
Define $\langle f,g\rangle_{\mathbb{M}_m(\mathbb{R})}\in\mathbb{M}_m(\mathbb{R})$ by
\begin{equation}\label{eq:mm-innerproduct}
\big(\langle f,g\rangle_{\mathbb{M}_m(\mathbb{R})}\big)_{ij}
:=\int_{\mathbb{R}^d} f_i(x)\,g_j(x)\,dx,
\qquad f,g\in L^2(\mathbb{R}^d,\mathbb{R}^m).
\end{equation}
Equivalently, $\langle f,g\rangle_{\mathbb{M}_m(\mathbb{R})}=\int_{\mathbb{R}^d} f(x)\,g(x)^\top\,dx$.

\begin{proposition}\label{prop:HM-module}
The pair $(L^2(\mathbb{R}^d,\mathbb{R}^m),\langle\cdot,\cdot\rangle_{\mathbb{M}_m(\mathbb{R})})$ is a Hilbert module over $\mathbb{M}_m(\mathbb{R})$.
\end{proposition}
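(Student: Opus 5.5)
We check the axioms of Definition~\ref{def:HM} directly, with the left action $(Af)(x):=Af(x)$ for $A\in\mathbb{M}_m(\mathbb{R})$, and then prove completeness by comparing the module norm with the usual $L^2$ norm.

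\textbf{Well-definedness and module structure.} Each entry $\int f_i g_j\,dx$ is finite by Cauchy--Schwarz, since $f_i,g_j\in L^2(\mathbb{R}^d)$. Moreover $Af\in L^2(\mathbb{R}^d,\mathbb{R}^m)$ because $|Af(x)|\le\|A\|\,|f(x)|$. The module axioms, such as $(AB)f=A(Bf)$, hold pointwise.

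\textbf{The four axioms.} We use the compact form $\langle f,g\rangle=\int f(x)g(x)^\top\,dx$.
\begin{enumerate}
\item[(iii)] Transposing under the integral gives $\langle g,f\rangle^\top=\int (g f^\top)^\top=\int f g^\top=\langle f,g\rangle$.
\item[(iv)] We have $\langle Af,g\rangle=\int Af g^\top=A\int f g^\top=A\langle f,g\rangle$, using linearity of the entrywise integral.
\item[(i)] The matrix $\langle f,f\rangle$ is symmetric. For $v\in\mathbb{R}^m$,
\[
v^\top\langle f,f\rangle v=\int (v^\top f(x))^2\,dx\ge 0,
\]
so $\langle f,f\rangle$ is positive semidefinite, which is the positive cone of the real $C^*$-algebra $\mathbb{M}_m(\mathbb{R})$.
\item[(ii)] If $\langle f,f\rangle=0$, then its diagonal entries vanish: $\int f_i^2=0$ for each $i$. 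Hence $f=0$ almost everywhere, i.e.\ $f=0$ in $L^2$.
\end{enumerate}

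\textbf{Completeness (the main step).} We need to relate $\|f\|_{\mathcal A}=\|\langle f,f\rangle\|^{1/2}$ to the Hilbert norm $\|f\|_{L^2}^2=\sum_i\int f_i^2=\mathrm{tr}\,\langle f,f\rangle$. Take $\|\cdot\|_{\mathcal A}$ to be the operator norm (the $C^*$-norm). For a positive semidefinite matrix $P$,
\[
\|P\|_{\mathrm{op}}\le \mathrm{tr}\,P\le m\,\|P\|_{\mathrm{op}}.
\]
Therefore
\[
\|f\|_{\mathcal A}\le\|f\|_{L^2}\le\sqrt m\,\|f\|_{\mathcal A}.
\]
If instead the Frobenius norm is used, the same argument works, since $\|P\|_{\mathrm{op}}\le\|P\|_F\le\mathrm{tr}\,P$ for positive semidefinite $P$; more generally, the paper's remark on equivalence of matrix norms gives comparable constants for any choice. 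Hence the module norm is equivalent to the $L^2$ norm. Since $L^2(\mathbb{R}^d,\mathbb{R}^m)\cong L^2(\mathbb{R}^d)^m$ is complete by Riesz--Fischer, Cauchy sequences for $\|\cdot\|_{\mathcal A}$ converge, and the module is complete.

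The only delicate points are to interpret positivity in (i) as positive semidefiniteness, and to use the trace/norm inequality for positive matrices to get the norm equivalence. Everything else is entrywise routine.
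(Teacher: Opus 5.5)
Your proof is correct and follows the paper's route: entrywise verification of axioms (i)--(iv), Cauchy--Schwarz for well-definedness, and completeness from the equivalence of the module norm with the $L^2$ norm via $\mathrm{tr}\,\langle f,f\rangle_{\mathbb{M}_m(\mathbb{R})}=\|f\|_{L^2}^2$. Your eigenvalue bound $\|P\|_{\mathrm{op}}\le \mathrm{tr}\,P\le m\,\|P\|_{\mathrm{op}}$ for positive semidefinite $P$ gives explicit constants for the trace--norm comparison on the positive cone, which the paper justifies only by citing equivalence of matrix norms (it works because the trace coincides with the nuclear norm on that cone).
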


\begin{proof}
We verify Definition~\ref{def:HM}.

\emph{(iv) Left $\mathbb{M}_m(\mathbb{R})$-linearity.}
For $A\in\mathbb{M}_m(\mathbb{R})$ and $f,g\in L^2(\mathbb{R}^d,\mathbb{R}^m)$, using \eqref{eq:mm-innerproduct} and finite-dimensionality,
\[
\big(\langle Af,g\rangle_{\mathbb{M}_m(\mathbb{R})}\big)_{ij}
=\int_{\mathbb{R}^d} (Af)_i(x)\,g_j(x)\,dx
=\sum_{k=1}^m A_{ik}\int_{\mathbb{R}^d} f_k(x)\,g_j(x)\,dx
=\big(A\langle f,g\rangle_{\mathbb{M}_m(\mathbb{R})}\big)_{ij}.
\]

\emph{(iii) Symmetry.}
By definition, $\big(\langle f,g\rangle_{\mathbb{M}_m(\mathbb{R})}\big)_{ij}
=\int_{\mathbb{R}^d} f_i(x)\,g_j(x)\,dx
=\big(\langle g,f\rangle_{\mathbb{M}_m(\mathbb{R})}\big)_{ji}$,

hence $\langle f,g\rangle_{\mathbb{M}_m(\mathbb{R})}=\langle g,f\rangle_{\mathbb{M}_m(\mathbb{R})}^{\top}$.

\emph{(i) Positivity.}
For any $v\in\mathbb{R}^m$,
\[
v^\top\langle f,f\rangle_{\mathbb{M}_m(\mathbb{R})}v
=\int_{\mathbb{R}^d} (v^\top f(x))^2\,dx \ge 0,
\]
so $\langle f,f\rangle_{\mathbb{M}_m(\mathbb{R})}$ is positive semidefinite.

\emph{(ii) Definiteness.}
If $\langle f,f\rangle_{\mathbb{M}_m(\mathbb{R})}=0$, then taking $v=e_i$ yields
$\int_{\mathbb{R}^d} |f_i(x)|^2\,dx=0$ for each $i$, hence $f=0$.

Therefore $\langle\cdot,\cdot\rangle_{\mathbb{M}_m(\mathbb{R})}$ is an $\mathbb{M}_m(\mathbb{R})$-valued inner product.

Finally, we note that $\langle f,g\rangle_{\mathbb{M}_m(\mathbb{R})}$ is well-defined: for each $i,j$,
by Cauchy--Schwarz,
\[
\left|\big(\langle f,g\rangle_{\mathbb{M}_m(\mathbb{R})}\big)_{ij}\right|
=\left|\int_{\mathbb{R}^d} f_i(x)\,g_j(x)\,dx\right|
\le \|f_i\|_{L^2}\,\|g_j\|_{L^2}<\infty.
\]

Moreover,
\[
\mathrm{tr}\big(\langle f,f\rangle_{\mathbb{M}_m(\mathbb{R})}\big)
=\sum_{i=1}^m \int_{\mathbb{R}^d} |f_i(x)|^2\,dx
=\|f\|_{L^2}^2.\]

Since $\mathbb{M}_m(\mathbb{R})$ is finite-dimensional, all matrix norms on it are equivalent; in particular, fixing any matrix norm $\|\cdot\|$,
there exist constants $c,C>0$ such that for every positive semidefinite $A\in\mathbb{M}_m(\mathbb{R})$,
\[
c\,\mathrm{tr}(A)\le \|A\| \le C\,\mathrm{tr}(A).
\]
Applying this to $A=\langle f,f\rangle_{\mathbb{M}_m(\mathbb{R})}$ yields
\[
c\,\|f\|_{L^2}^2 \le \|\langle f,f\rangle_{\mathbb{M}_m(\mathbb{R})}\|
\le C\,\|f\|_{L^2}^2,
\]
hence
\[
\sqrt{c}\,\|f\|_{L^2}\le \|f\|_{\mathbb{M}_m(\mathbb{R})}\le \sqrt{C}\,\|f\|_{L^2}.
\]
Therefore the module norm $\|\cdot\|_{\mathbb{M}_m(\mathbb{R})}$ is equivalent to $\|\cdot\|_{L^2}$, and
$(L^2(\mathbb{R}^d,\mathbb{R}^m),\|\cdot\|_{\mathbb{M}_m(\mathbb{R})})$ is complete since it is the same
underlying space endowed with an equivalent norm. This proves that it is a Hilbert
$\mathbb{M}_m(\mathbb{R})$-module.

\end{proof}

\subsubsection{Vector geometry preservation: covariance under channel mixing}

The following lemma records the natural covariance of the $\mathbb{M}_m(\mathbb{R})$-valued inner product under constant channel mixing. In particular, restricting to $U\in O(m)=\{U\in\mathbb{M}_m(\mathbb{R})\;:\;U^\top U=I_m\}$ yields the usual equivariance under orthogonal channel mixing (with preservation of scalar $L^2$-energy).

\begin{lemma}[Covariance under constant channel mixing]\label{lem:equivariance}
Let $U\in \mathbb{M}_m(\mathbb{R})$ be constant and define $(Uf)(x):=U\,f(x)$ for $f\in L^2(\mathbb{R}^d,\mathbb{R}^m)$.
Then for all $f,g\in L^2(\mathbb{R}^d,\mathbb{R}^m)$,

\[
\langle Uf,Ug\rangle_{\mathbb{M}_m(\mathbb{R})}
=U\,\langle f,g\rangle_{\mathbb{M}_m(\mathbb{R})}\,U^{\top}.
\]
\end{lemma}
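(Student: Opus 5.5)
The plan is a direct computation from the integral form of the $\mathbb{M}_m(\mathbb{R})$-valued inner product, $\langle f,g\rangle_{\mathbb{M}_m(\mathbb{R})}=\int_{\mathbb{R}^d} f(x)\,g(x)^\top\,dx$, recorded right after \eqref{eq:mm-innerproduct}.

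First, I will check that $Uf$ and $Ug$ lie in $L^2(\mathbb{R}^d,\mathbb{R}^m)$. Each component is $(Uf)_i=\sum_k U_{ik}f_k$, a finite linear combination of $L^2$ functions, so this is immediate. Hence the left-hand side is well defined by Proposition~\ref{prop:HM-module}.

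Next, for almost every $x$ I will use the pointwise matrix identity
\[
(Uf(x))(Ug(x))^\top = U\,f(x)\,g(x)^\top\,U^\top .
\]
Integrating this identity gives
\[
\langle Uf,Ug\rangle_{\mathbb{M}_m(\mathbb{R})}=\int_{\mathbb{R}^d} U\,f(x)g(x)^\top U^\top\,dx .
\]

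The only step needing justification is pulling the constant matrices $U$ and $U^\top$ outside the integral. I will do this entrywise. The $(i,j)$ entry of the integrand is $\sum_{k,l}U_{ik}f_k(x)g_l(x)U_{jl}$. This is a finite sum of integrable functions, since $f_kg_l\in L^1$ by Cauchy--Schwarz as in the proof of Proposition~\ref{prop:HM-module}. Linearity of the Lebesgue integral then gives $\sum_{k,l}U_{ik}\big(\langle f,g\rangle_{\mathbb{M}_m(\mathbb{R})}\big)_{kl}U_{jl}$, which is exactly the $(i,j)$ entry of $U\langle f,g\rangle_{\mathbb{M}_m(\mathbb{R})}U^\top$.

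Alternatively, the left factor can be handled by property (iv) of Definition~\ref{def:HM}, already verified in Proposition~\ref{prop:HM-module}: $\langle Uf,Ug\rangle = U\langle f,Ug\rangle$. Symmetry (iii) then gives $\langle f,Ug\rangle=\langle Ug,f\rangle^\top=(U\langle g,f\rangle)^\top=\langle f,g\rangle U^\top$. Combining the two yields the claim without any new integration.

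There is no real obstacle here. The only point of care is the integrability needed to justify linearity of the integral, and that is supplied by the earlier well-definedness bound.
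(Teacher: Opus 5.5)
Your proof is correct, and your ``alternatively'' paragraph is exactly the paper's argument: it pulls out the left factor by (iv), then uses (iii) and (iv) once more to turn $\langle f,Ug\rangle_{\mathbb{M}_m(\mathbb{R})}$ into $\langle f,g\rangle_{\mathbb{M}_m(\mathbb{R})}U^{\top}$. Your primary entrywise computation verifies the same identity directly on the integrand, just as (iv) itself was checked in Proposition~\ref{prop:HM-module}; the paper's axiomatic version has the mild advantage of needing neither the integral formula nor any integrability bookkeeping.
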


\begin{proof}
This is an immediate consequence of properties (iii) and (iv) of the
$\mathbb{M}_m(\mathbb{R})$-valued inner product:
\[
\langle Uf,Ug\rangle_{\mathbb{M}_m(\mathbb{R})}
=U\,\langle f,Ug\rangle_{\mathbb{M}_m(\mathbb{R})}
=U\,\langle Ug,f\rangle_{\mathbb{M}_m(\mathbb{R})}^{\top}
=U\,\big(U\,\langle g,f\rangle_{\mathbb{M}_m(\mathbb{R})}\big)^{\top}
=U\,\langle f,g\rangle_{\mathbb{M}_m(\mathbb{R})}\,U^{\top}.
\]
\end{proof}
\begin{example}[Same scalar energy, different vector geometry]\label{ex:geometry}
Let $m=2$ and pick $u\in L^2(\mathbb{R}^d)$, $u\neq 0$. Define
 $f:=(u,0)^{\top}$ and $g:=\frac{1}{\sqrt{2}}(u,u)^{\top}$.Then $\|f\|_{L^2}^2=\|g\|_{L^2}^2=\|u\|_2^2$, so a purely scalar-energy viewpoint does not distinguish them. However, their $\mathbb{M}_2(\mathbb{R})$-valued ``energies'' differ:
\[
\langle f,f\rangle_{\mathbb{M}_2(\mathbb{R})}=
\begin{pmatrix}\|u\|_2^2&0\\0&0\end{pmatrix},
\qquad
\langle g,g\rangle_{\mathbb{M}_2(\mathbb{R})}=\frac{\|u\|_2^2}{2}\begin{pmatrix}1&1\\1&1\end{pmatrix}.
\]
Thus $g$ exhibits nontrivial off-diagonal terms already at the level of $\langle g,g\rangle_{\mathbb{M}_2(\mathbb{R})}$; these encode cross-channel correlations and are invisible to componentwise scalar pairings (see Remark~\ref{rem:vector-geometry}).
\end{example}

\begin{example}[Equivariance and basis-independent anisotropy]\label{ex:equivariance}
Let $m=2$ and let $u\in L^2(\mathbb{R}^d)$ with $u\neq 0$. Define $f:=(u,0)^{\top}$ and, for $\theta\in\mathbb{R}$,
let $U_\theta\in O(2)$ be the rotation matrix
\[
U_\theta:=
\begin{pmatrix}
\cos\theta&-\sin\theta\\
\sin\theta&\cos\theta
\end{pmatrix}.
\]
Set $f_\theta:=U_\theta f$. Then $\|f_\theta\|_{L^2}=\|f\|_{L^2}$ for all $\theta$, while
\[
\langle f_\theta,f_\theta\rangle_{\mathbb{M}_2(\mathbb{R})}
=U_\theta\,\langle f,f\rangle_{\mathbb{M}_2(\mathbb{R})}\,U_\theta^{\top}.
\]
In particular, the spectrum of $\langle f_\theta,f_\theta\rangle_{\mathbb{M}_2(\mathbb{R})}$ is independent of $\theta$ and equals $\{\|u\|_2^2,0\}$, providing a basis-independent measure of anisotropy that is invisible to the scalar energy $\|f_\theta\|_{L^2}^2$.
\end{example}

\begin{remark}[Vector geometry preservation (in a precise sense)]

The ``geometry'' preserved by our construction is the one induced by the $\mathbb{M}_m(\mathbb{R})$-valued inner product

\[
\big(\langle f,g\rangle_{\mathbb{M}_m(\mathbb{R})}\big)_{ij}
=\int_{\mathbb{R}^d} f_i(x)\,g_j(x)\,dx,
\]

\noindent which is the matrix of pairwise $L^2$ pairings between components. In particular, off-diagonal entries encode
cross-component interactions that are invisible to purely componentwise scalar coefficient extraction. In particular, the off-diagonal entries are not recoverable from componentwise scalar coefficients without extra coupling assumptions.
Moreover, the framework is equivariant under changes of basis in $\mathbb{R}^m$: for any $U\in O(m), \langle Uf,Ug\rangle_{\mathbb{M}_m(\mathbb{R})}=U\,\langle f,g\rangle_{\mathbb{M}_m(\mathbb{R})}\,U^{\top}$ (Lemma~\ref{lem:equivariance}). Therefore, canonical invariants of $\langle f,f\rangle_{\mathbb{M}_m(\mathbb{R})}$ (e.g.\ trace, spectrum) are basis-independent. This provides a precise meaning in which the vector structure is respected.
\end{remark}

\begin{corollary}[Basis-independent geometric quantities]\label{cor:invariants}
Let $U\in \mathbb{M}_m(\mathbb{R})$ be constant and $f\in L^2(\mathbb{R}^d,\mathbb{R}^m)$. Then

\[
\langle Uf,Uf\rangle_{\mathbb{M}_m(\mathbb{R})}
=U\,\langle f,f\rangle_{\mathbb{M}_m(\mathbb{R})}\,U^{\top}.
\]

If $U\in O(m)$, then $\langle f,f\rangle_{\mathbb{M}_m(\mathbb{R})}$ and $\langle Uf,Uf\rangle_{\mathbb{M}_m(\mathbb{R})}$ have the same spectrum, and hence
\[
\mathrm{tr}\big(\langle Uf,Uf\rangle_{\mathbb{M}_m(\mathbb{R})}\big)
=\mathrm{tr}\big(\langle f,f\rangle_{\mathbb{M}_m(\mathbb{R})}\big).
\]
\end{corollary}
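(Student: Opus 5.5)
The first identity is Lemma~\ref{lem:equivariance} with $g=f$, so no new computation is needed for it. I will simply record that Lemma~\ref{lem:equivariance} holds for an arbitrary constant $U\in\mathbb{M}_m(\mathbb{R})$ and specialize to $g=f$, which gives $\langle Uf,Uf\rangle_{\mathbb{M}_m(\mathbb{R})}=U\langle f,f\rangle_{\mathbb{M}_m(\mathbb{R})}U^\top$. One small point to check is that $Uf$ again lies in $L^2(\mathbb{R}^d,\mathbb{R}^m)$. This follows from $|U f(x)|\le \|U\|_{\mathrm{op}}|f(x)|$ pointwise, and in any case it is implicit in the lemma.

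For the spectral statement, take $U\in O(m)$, so that $U^\top=U^{-1}$. The identity then reads $\langle Uf,Uf\rangle_{\mathbb{M}_m(\mathbb{R})}=U\langle f,f\rangle_{\mathbb{M}_m(\mathbb{R})}U^{-1}$, so the two Gram matrices are similar. Similar matrices have the same characteristic polynomial, since
\[
\det(\lambda I_m-UAU^{-1})=\det\big(U(\lambda I_m-A)U^{-1}\big)=\det(\lambda I_m-A),
\]
and therefore the same eigenvalues with algebraic multiplicities. Because both matrices are symmetric positive semidefinite by Proposition~\ref{prop:HM-module}, they are in fact orthogonally similar. Hence the spectra agree as multisets of nonnegative reals.

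The trace equality follows in either of two ways. One is to note that the trace is the sum of the eigenvalues, so equal spectra give equal traces. The other is to use cyclicity: $\mathrm{tr}(UAU^\top)=\mathrm{tr}(AU^\top U)=\mathrm{tr}(A)$. As a consistency check, I will also observe that both sides equal $\|f\|_{L^2}^2$, respectively $\|Uf\|_{L^2}^2$, by the trace formula in the proof of Proposition~\ref{prop:HM-module}. This shows that orthogonal channel mixing preserves scalar energy.

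No step is a genuine obstacle. The only care needed is to keep the hypotheses separate. The covariance identity holds for every $U\in\mathbb{M}_m(\mathbb{R})$. Spectral and trace invariance, by contrast, require $U^\top=U^{-1}$ and fail in general; for instance, $U=2I_m$ scales the spectrum by $4$.
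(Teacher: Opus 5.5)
Your proposal is correct and follows essentially the same route as the paper's proof: specialize Lemma~\ref{lem:equivariance} to $g=f$, use $U^\top=U^{-1}$ to see that the two matrices are similar and so have the same spectrum, and get the trace identity by cyclicity. Your side remarks are accurate but not needed for the argument: that $Uf\in L^2$, that the trace identity amounts to $\|Uf\|_{L^2}=\|f\|_{L^2}$, and that invariance fails for non-orthogonal $U$.
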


\begin{proof}
The identity follows from Lemma~\ref{lem:equivariance} by taking $g=f$.
Assume now $U\in O(m)$, so $U^\top=U^{-1}$. Then the matrices $A=\langle f,f\rangle_{\mathbb{M}_m(\mathbb{R})}$ and
$UAU^\top$ are similar (indeed $UAU^\top = U A U^{-1}$), hence they have the same spectrum.
The trace identity follows from $\mathrm{tr}(UAU^\top)=\mathrm{tr}(A U^\top U)=\mathrm{tr}(A)$.
\end{proof}

\subsection{Key properties of the \texorpdfstring{$\mathbb{M}_m(\mathbb{R})$}{Mm(R)}-inner product}

We collect a few properties of the $\mathbb{M}_m(\mathbb{R})$-valued inner product
\[
\big(\langle f,g\rangle_{\mathbb{M}_m(\mathbb{R})}\big)_{ij}
=\int_{\mathbb{R}^d} f_i(x)\,g_j(x)\,dx,
\qquad f,g\in L^2(\mathbb{R}^d,\mathbb{R}^m).
\]
All results hold for general $d\ge 1$ and $m\ge 1$.

\begin{proposition}[Continuity / Cauchy--Schwarz]\label{P1}
Fix $f\in L^2(\mathbb{R}^d,\mathbb{R}^m)$. The map
\[
T_f:\ L^2(\mathbb{R}^d,\mathbb{R}^m)\to \mathbb{M}_m(\mathbb{R}),\qquad
T_f(g):=\langle f,g\rangle_{\mathbb{M}_m(\mathbb{R})},
\]
is linear and bounded. More precisely, for any matrix norm $\|\cdot\|$ on $\mathbb{M}_m(\mathbb{R})$,
there exists a constant $C_m>0$ (depending only on $m$ and on the chosen norm) such that
\[
\|\langle f,g\rangle_{\mathbb{M}_m(\mathbb{R})}\|
\le C_m\,\|f\|_{L^2(\mathbb{R}^d,\mathbb{R}^m)}\,\|g\|_{L^2(\mathbb{R}^d,\mathbb{R}^m)}
\qquad\forall\,g\in L^2(\mathbb{R}^d,\mathbb{R}^m).
\]
In particular, $T_f$ is continuous.
\end{proposition}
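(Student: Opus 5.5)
The plan is to prove linearity directly from the definition \eqref{eq:mm-innerproduct}, then bound each entry by Cauchy--Schwarz as in the proof of Proposition~\ref{prop:HM-module}, and finally pass from the entrywise bound to an arbitrary matrix norm by finite-dimensionality.

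\emph{Linearity.} For $g,h\in L^2(\mathbb{R}^d,\mathbb{R}^m)$ and $\alpha,\beta\in\mathbb{R}$, the $(i,j)$ entry of $\langle f,\alpha g+\beta h\rangle_{\mathbb{M}_m(\mathbb{R})}$ is $\int f_i(\alpha g_j+\beta h_j)\,dx$. This splits by linearity of the integral, and every integral involved is finite by Cauchy--Schwarz. Hence $T_f$ is linear. (Equivalently, one can use (iii) together with linearity in the first slot, which follows from (iv) with $A$ a scalar multiple of $I_m$.)

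\emph{Entrywise bound.} For each $i,j$, Cauchy--Schwarz gives
\[
\big|\big(\langle f,g\rangle_{\mathbb{M}_m(\mathbb{R})}\big)_{ij}\big|\le \|f_i\|_{L^2}\|g_j\|_{L^2}.
\]
Summing the squares gives the Frobenius estimate
\[
\|\langle f,g\rangle_{\mathbb{M}_m(\mathbb{R})}\|_F^2\le \sum_{i,j}\|f_i\|_{L^2}^2\|g_j\|_{L^2}^2=\|f\|_{L^2}^2\|g\|_{L^2}^2,
\]
since $\sum_i\|f_i\|_{L^2}^2=\|f\|_{L^2}^2$. So with the Frobenius norm the constant is $1$.

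\emph{Arbitrary norm.} Since $\mathbb{M}_m(\mathbb{R})$ is finite-dimensional, any norm $\|\cdot\|$ satisfies $\|A\|\le C_m\|A\|_F$ for some $C_m$ depending only on $m$ and the chosen norm. Combining this with the Frobenius estimate gives the stated inequality.

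\emph{Continuity.} Boundedness of the linear map $T_f$ gives continuity, via $\|T_f g-T_f h\|\le C_m\|f\|\,\|g-h\|$.

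There is no real obstacle here. The only point needing care is the Frobenius summation step, which shows the constant is independent of $d$ and yields $C_m=1$ in the Frobenius case.
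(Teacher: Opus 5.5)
Your proof is correct and follows essentially the same route as the paper: entrywise Cauchy--Schwarz, then finite-dimensional norm equivalence. The only cosmetic difference is that you pass through the Frobenius norm (getting constant $1$ there by summing squares), whereas the paper bounds each entry by $\|f\|_{L^2}\|g\|_{L^2}$ and compares an arbitrary matrix norm to the max-entry norm.
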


\begin{proof}
Linearity is immediate from the definition.
For each pair $(i,j)$, the scalar Cauchy--Schwarz inequality gives
\[
\big|\big(\langle f,g\rangle_{\mathbb{M}_m(\mathbb{R})}\big)_{ij}\big|
=\left|\int_{\mathbb{R}^d} f_i(x)\,g_j(x)\,dx\right|
\le \|f_i\|_{L^2(\mathbb{R}^d)}\,\|g_j\|_{L^2(\mathbb{R}^d)}
\le \|f\|_{L^2}\,\|g\|_{L^2}.
\]
Hence all entries are bounded by $\|f\|_{L^2}\|g\|_{L^2}$.
Since $\mathbb{M}_m(\mathbb{R})$ is finite-dimensional, any matrix norm $\|\cdot\|$
is dominated by a constant multiple of the max-entry norm. Therefore there exists $C_m>0$ such that
\[
\|\langle f,g\rangle_{\mathbb{M}_m(\mathbb{R})}\|
\le C_m\,\|f\|_{L^2}\,\|g\|_{L^2}.
\]
\end{proof}
\begin{proposition}[Compatibility with separable tensor products]\label{prop:hadamard}
Let $d_1,d_2\ge 1$ and $m\ge 1$.
Let $f,r\in L^2(\mathbb{R}^{d_1},\mathbb{R}^m)$ and $g,u\in L^2(\mathbb{R}^{d_2},\mathbb{R}^m)$.
Define $h,k\in L^2(\mathbb{R}^{d_1+d_2},\mathbb{R}^m)$ by
\[
h(x,y):= f(x)\odot g(y),\qquad
k(x,y):= r(x)\odot u(y),
\]
where $\odot$ denotes the componentwise product: $(a\odot b)_i:=a_i b_i$.
This compatibility will be used later in our separable construction of multivariate vector-valued wavelets.
Then

\[
\langle h,k\rangle_{\mathbb{M}_m(\mathbb{R})}
=
\langle f,r\rangle_{\mathbb{M}_m(\mathbb{R})}\ \odot\ \langle g,u\rangle_{\mathbb{M}_m(\mathbb{R})},
\]
where on the right-hand side $\odot$ denotes the Hadamard (entrywise) product of matrices:
$(A\odot B)_{ij}:=A_{ij}B_{ij}$.
\end{proposition}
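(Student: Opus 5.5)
The plan is a direct entrywise computation combined with Fubini's theorem. First I will check that $h$ and $k$ really lie in $L^2(\mathbb{R}^{d_1+d_2},\mathbb{R}^m)$. For each component $i$ we have $h_i(x,y)=f_i(x)g_i(y)$, which is measurable on the product space. By Tonelli's theorem,
\[
\int_{\mathbb{R}^{d_1+d_2}}|h_i(x,y)|^2\,dx\,dy=\|f_i\|_{L^2(\mathbb{R}^{d_1})}^2\,\|g_i\|_{L^2(\mathbb{R}^{d_2})}^2<\infty.
\]
Summing over $i$ shows $h\in L^2(\mathbb{R}^{d_1+d_2},\mathbb{R}^m)$, and the same argument applies to $k$. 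So both sides of the claimed identity are well defined, the left side by Proposition~\ref{prop:HM-module}.

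Next I fix $i,j\in\{1,\dots,m\}$ and expand the $(i,j)$ entry using \eqref{eq:mm-innerproduct}:
\[
\big(\langle h,k\rangle_{\mathbb{M}_m(\mathbb{R})}\big)_{ij}=\int_{\mathbb{R}^{d_1+d_2}} f_i(x)g_i(y)\,r_j(x)u_j(y)\,dx\,dy .
\]
The integrand factors as $\big(f_i(x)r_j(x)\big)\big(g_i(y)u_j(y)\big)$. The key step is to justify splitting this integral into the product
\[
\Big(\int_{\mathbb{R}^{d_1}} f_i r_j\,dx\Big)\Big(\int_{\mathbb{R}^{d_2}} g_i u_j\,dy\Big).
\]
To do this I apply Tonelli's theorem to the absolute value of the integrand. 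That gives the product $\int|f_ir_j|\,dx\cdot\int|g_iu_j|\,dy$. By the scalar Cauchy--Schwarz inequality, as in Proposition~\ref{P1}, this is at most $\|f_i\|\,\|r_j\|\,\|g_i\|\,\|u_j\|<\infty$. The integrand is therefore in $L^1(\mathbb{R}^{d_1+d_2})$, and Fubini's theorem yields the factorization.

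By \eqref{eq:mm-innerproduct}, the two factors are exactly $\big(\langle f,r\rangle_{\mathbb{M}_m(\mathbb{R})}\big)_{ij}$ and $\big(\langle g,u\rangle_{\mathbb{M}_m(\mathbb{R})}\big)_{ij}$. Their product is the $(i,j)$ entry of the Hadamard product. Since $i,j$ were arbitrary, the matrix identity follows.

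The only real obstacle is the integrability check that licenses Fubini's theorem; the rest is bookkeeping of indices. One point needs care: the row index $i$ goes with $f,g$ (the first argument) and the column index $j$ with $r,u$ (the second argument). This ordering must be kept consistent with the convention $\langle f,g\rangle=\int f g^\top$ in \eqref{eq:mm-innerproduct}.
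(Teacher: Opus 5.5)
Your proposal is correct and follows the same route as the paper: fix $(i,j)$, expand the entry from the definition of $\langle\cdot,\cdot\rangle_{\mathbb{M}_m(\mathbb{R})}$, and factor the double integral by Fubini. Your extra checks, that $h,k\in L^2(\mathbb{R}^{d_1+d_2},\mathbb{R}^m)$ and that the integrand is in $L^1$ (via Tonelli and Cauchy--Schwarz), make explicit the justification the paper leaves implicit.
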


\begin{proof}
Fix $i,j\in\{1,\dots,m\}$. By definition and Fubini,
\begin{align*}
\big(\langle h,k\rangle_{\mathbb{M}_m(\mathbb{R})}\big)_{ij}
&=\int_{\mathbb{R}^{d_1+d_2}} h_i(x,y)\,k_j(x,y)\,dx\,dy\\
&=\int_{\mathbb{R}^{d_1+d_2}} f_i(x)\,g_i(y)\,r_j(x)\,u_j(y)\,dx\,dy\\
&=\left(\int_{\mathbb{R}^{d_1}} f_i(x)\,r_j(x)\,dx\right)
  \left(\int_{\mathbb{R}^{d_2}} g_i(y)\,u_j(y)\,dy\right)\\
&=\big(\langle f,r\rangle_{\mathbb{M}_m(\mathbb{R})}\big)_{ij}\,
  \big(\langle g,u\rangle_{\mathbb{M}_m(\mathbb{R})}\big)_{ij}.
\end{align*}
This is exactly the $(i,j)$-entry of
$\langle f,r\rangle_{\mathbb{M}_m(\mathbb{R})}\odot \langle g,u\rangle_{\mathbb{M}_m(\mathbb{R})}$.
\end{proof}

\begin{remark}[Why this encodes vector geometry]\label{rem:vector-geometry}
Since for all $f\in L^2(\mathbb{R}^d,\mathbb{R}^m)$ one has
\[
\mathrm{tr}\big(\langle f,f\rangle_{\mathbb{M}_m(\mathbb{R})}\big)
=\sum_{i=1}^m \int_{\mathbb{R}^d} |f_i(x)|^2\,dx
=\|f\|_{L^2(\mathbb{R}^d,\mathbb{R}^m)}^2.
\]
Then the trace of $\langle f,f\rangle_{\mathbb{M}_m(\mathbb{R})}$  recovers the classical $L^2$ energy. It follows that  the matrix $\langle f,f\rangle_{\mathbb{M}_m(\mathbb{R})}$ contains, besides the diagonal energies
$\int_{\mathbb{R}^d} |f_i(x)|^2\,dx$, the off-diagonal cross terms $\int_{\mathbb{R}^d} f_i(x)\,f_j(x)\,dx$ ($i\neq j$), which quantify non-normalized cross-channel correlations.
This illustrates how the $\mathbb{M}_m(\mathbb{R})$-valued inner product retains cross-channel information that is invisible to scalar energy considerations.

\end{remark}

\subsection{Module frames, bases, and intrinsic reconstruction}\label{subsec:parseval}
The Hilbert $\mathbb{M}_m(\mathbb{R})$-module structure provides a natural coefficient algebra and a canonical reconstruction mechanism for vector-valued wavelet analysis. In particular, it allows one to define orthonormal module bases and Parseval module frames whose coefficients are intrinsically matrix-valued. This subsection recalls the VMRA setting and records the basic consequences of working with the $\mathbb{M}_m(\mathbb{R})$-valued inner product. We begin with the countable generation property.

\begin{proposition}\label{prop:countgen}
The Hilbert module $(L^2(\mathbb{R}^d,\mathbb{R}^m),\langle\cdot,\cdot\rangle_{\mathbb{M}_m(\mathbb{R})})$
is countably generated. Moreover, every closed submodule admits an orthogonal complement.
\end{proposition}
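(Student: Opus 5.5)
Both claims follow by reducing the $\mathbb{M}_m(\mathbb{R})$-module structure to the ordinary Hilbert-space structure of $L^2(\mathbb{R}^d)$, one channel at a time.

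\emph{Countable generation.} Fix a countable orthonormal basis $\{u_n\}_{n\ge1}$ of the scalar space $L^2(\mathbb{R}^d)$; it exists by separability. Set $x_n:=u_n e_1\in L^2(\mathbb{R}^d,\mathbb{R}^m)$. For a finite family $A_n\in\mathbb{M}_m(\mathbb{R})$, the module combination $\sum_n A_n x_n$ has $i$-th component $\sum_n (A_n)_{i1}u_n$, since $(A_nx_n)(x)=u_n(x)A_ne_1$. Choosing the first columns of the $A_n$ freely, every $f=(f_1,\dots,f_m)^\top$ with finitely supported expansions $f_i=\sum_n c_{in}u_n$ is attained. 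Such $f$ are dense in $L^2(\mathbb{R}^d,\mathbb{R}^m)$ for the $L^2$ norm. By the norm equivalence established in the proof of Proposition~\ref{prop:HM-module}, they are also dense for the module norm. Hence $\{x_n\}$ generates the module.

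\emph{Orthogonal complements.} Let $\mathcal{K}$ be a closed submodule and define $\mathcal{K}^\perp:=\{g:\langle f,g\rangle_{\mathbb{M}_m(\mathbb{R})}=0\ \forall f\in\mathcal{K}\}$. The key step is to show that $\mathcal{K}^\perp$ equals the scalar Hilbert-space orthogonal complement $\mathcal{K}^{\perp_{L^2}}$.

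The inclusion $\mathcal{K}^\perp\subseteq\mathcal{K}^{\perp_{L^2}}$ holds because the scalar inner product is $\mathrm{tr}\,\langle f,g\rangle_{\mathbb{M}_m(\mathbb{R})}$, as recorded in Remark~\ref{rem:vector-geometry}.

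For the reverse inclusion, take $g\in\mathcal{K}^{\perp_{L^2}}$ and $f\in\mathcal{K}$. Since $\mathcal{K}$ is a submodule, the matrix unit $E_{ji}$ gives $E_{ji}f\in\mathcal{K}$. Applying (iv) of Definition~\ref{def:HM},
\[
0=\mathrm{tr}\,\langle E_{ji}f,g\rangle_{\mathbb{M}_m(\mathbb{R})}=\mathrm{tr}\big(E_{ji}\langle f,g\rangle_{\mathbb{M}_m(\mathbb{R})}\big)=\big(\langle f,g\rangle_{\mathbb{M}_m(\mathbb{R})}\big)_{ij}.
\]
This holds for every pair $(i,j)$, so $\langle f,g\rangle_{\mathbb{M}_m(\mathbb{R})}=0$ and $g\in\mathcal{K}^\perp$.

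To finish, note that $\mathcal{K}$ is closed in the $L^2$ norm, again by norm equivalence. The Hilbert projection theorem therefore gives $L^2(\mathbb{R}^d,\mathbb{R}^m)=\mathcal{K}\oplus\mathcal{K}^{\perp_{L^2}}=\mathcal{K}\oplus\mathcal{K}^\perp$. Finally, $\mathcal{K}^\perp$ is itself a closed submodule: it is invariant under the module action because $\langle f,Ag\rangle_{\mathbb{M}_m(\mathbb{R})}=\langle f,g\rangle_{\mathbb{M}_m(\mathbb{R})}A^\top$, which follows from (iii) and (iv).

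The only genuinely non-routine point is the identification $\mathcal{K}^\perp=\mathcal{K}^{\perp_{L^2}}$. The trace argument with matrix units handles it, and it is exactly where the submodule hypothesis is needed. Without that hypothesis, complementation can fail in general Hilbert $C^*$-modules; here it succeeds because the coefficient algebra is finite-dimensional.
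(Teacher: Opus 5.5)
Your proof is correct, but it takes a different route from the paper. The paper settles both claims by citation. It quotes separability of $L^2(\mathbb{R}^d,\mathbb{R}^m)$ as giving countable generation (Lance). It then deduces complementation from the general theorem that, over coefficient algebras of compact operators (here $\mathbb{M}_m$, acting on a finite-dimensional space), every closed submodule is orthogonally complemented (Manuilov--Troitsky). You instead argue by hand. You exhibit the explicit generating family $\{u_n e_1\}_{n\ge 1}$. You then identify the module complement $\mathcal{K}^\perp$ with the ordinary $L^2$-orthogonal complement, using the trace and the matrix units via $\mathrm{tr}\big(E_{ji}\langle f,g\rangle_{\mathbb{M}_m(\mathbb{R})}\big)=\big(\langle f,g\rangle_{\mathbb{M}_m(\mathbb{R})}\big)_{ij}$, and the Hilbert projection theorem does the rest. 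The paper's route is shorter and places the statement inside the general structure theory. However, the results it cites are formulated for complex $C^*$-algebras, so applying them to the real algebra $\mathbb{M}_m(\mathbb{R})$ tacitly needs a further justification. Your argument is self-contained and works directly over $\mathbb{R}$. It does not use countable generation for the complementation claim, and it shows that the module-orthogonal projection onto a closed submodule is just the usual $L^2$-orthogonal projection. It also generalizes at no cost: with Lemma~\ref{lem:tau_module_equiv} (taking $\tau=\mathrm{tr}$) in place of the concrete norm equivalence, the same matrix-unit argument shows that every closed submodule of an arbitrary Hilbert $\mathbb{M}_m(\mathbb{R})$-module is orthogonally complemented.
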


\begin{proof}
Since $L^2(\mathbb{R}^d,\mathbb{R}^m)$ is separable, the Hilbert module
$(L^2(\mathbb{R}^d,\mathbb{R}^m),\langle\cdot,\cdot\rangle_{\mathbb{M}_m(\mathbb{R})})$ is countably generated; see
\cite[Chapter~6]{lance1995hilbert}.
Moreover, because $\mathbb{M}_m(\mathbb{R})$ is finite-dimensional, it is (as a $C^*$-algebra) isomorphic to the algebra of compact
operators on a finite-dimensional Hilbert space, and hence every closed submodule of a countably generated Hilbert
$\mathbb{M}_m(\mathbb{R})$-module is orthogonally complemented (see, e.g., \cite[Chapter~15]{manuilov2000hilbert}).
\end{proof}

We emphasize that the coefficient structure used in this paper is not postulated through matrix refinement masks; it arises canonically from the $\mathbb{M}_m(\mathbb{R})$-valued inner product defining the Hilbert-module structure on $L^2(\mathbb{R}^d,\mathbb{R}^m)$. We record this via module orthonormal bases and Parseval module frames.

\begin{definition}[Orthonormal systems, Parseval frames, and standard bases in Hilbert $\mathcal{A}$-modules]\label{def:module-frames-bases}
Let $\mathbb{H}$ be a Hilbert $\mathcal{A}$-module.
\begin{enumerate}
\item A family $S=\{h_i\}_{i\in I}\subset \mathbb{H}$ is an \emph{orthonormal system (ONS)} if
\begin{equation}\label{eq:AONS}
\langle h_i,h_j\rangle_{\mathcal{A}}=0 \ (i\neq j),\qquad
\langle h_i,h_i\rangle_{\mathcal{A}}=p_i,
\end{equation}
where each $p_i\in\mathcal{A}$ is a nonzero projection ($p_i^2=p_i=p_i^*$). If moreover $p_i=1_{\mathcal{A}}$ for all $i$, then $S$ is called an \emph{orthonormal system of unit vectors}.
\item A family $\{x_i\}_{i\in I}\subset \mathbb{H}$ is a \emph{Parseval $\mathcal{A}$-module frame} if for every $f\in\mathbb{H}$,
\[
f=\sum_{i\in I}\langle f,x_i\rangle_{\mathcal{A}}\,x_i
\]
with convergence in the Hilbert-module norm.
\item A family $S=\{h_i\}_{i\in I}\subset \mathbb{H}$ is an \emph{$\mathcal{A}$-orthonormal basis} (or \emph{standard basis}) if it is an orthonormal system of unit vectors and for every $f\in\mathbb{H}$ one has the reconstruction formula
\begin{equation}\label{eq:AONB-reconstruction}
f=\sum_{i\in I}\langle f,h_i\rangle_{\mathcal{A}}\,h_i,
\end{equation}
with convergence in the Hilbert-module norm. Equivalently, $S$ is an $\mathcal{A}$-Parseval frame and is orthonormal.
\end{enumerate}
\end{definition}

\begin{definition}[$\mathbb{M}_m(\mathbb{R})$-orthonormal system of translates]\label{def:mm-ons-translates}
Let $\mathcal{A}=\mathbb{M}_m(\mathbb{R})$ and $\mathbb{H}=L^2(\mathbb{R}^d,\mathbb{R}^m)$
endowed with the $\mathcal A$-valued inner product
$\langle f,g\rangle_{\mathcal{A}}=\int_{\mathbb{R}^d} f(x)\,g(x)^\top\,dx$.
A family $\{U_k\}_{k\in\mathbb Z^d}\subset\mathbb H$ is called
\emph{$\mathcal A$-orthonormal (of unit vectors)} if
\[\langle U_k,U_u\rangle_{\mathcal A}=\delta_{k,u}\,I_m,
\qquad k,u\in\mathbb Z^d.
\]
If $U\in\mathbb H$ and $\{U(\cdot-k)\}_{k\in\mathbb Z^d}$ is $\mathcal A$-orthonormal,
we call $U$ an \emph{$\mathcal A$-orthonormal vector-valued function}.
\end{definition}

\begin{proposition}[Uniqueness of coefficients for orthonormal unit systems]\label{prop:coeff-unique}
Let $S=\{h_i\}_{i\in I}\subset\mathbb H$ be $\mathcal A$-orthonormal with
$\langle h_i,h_i\rangle_{\mathcal A}=1_{\mathcal A}$, and assume the inner product is
continuous with respect to norm convergence.
Assume $f=\sum_{i\in I} a_i h_i$ with convergence in norm, where $(a_i)\subset\mathcal A$.
Then necessarily $a_i=\langle f,h_i\rangle_{\mathcal A}$ for all $i$.
\end{proposition}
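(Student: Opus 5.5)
The plan is to pair both sides of the expansion with a fixed element $h_j$ and use continuity of the $\mathcal A$-valued inner product to move the pairing inside the infinite sum. First I fix $j\in I$ and take finite partial sums $f_F:=\sum_{i\in F}a_i h_i$, where $F\subset I$ ranges over finite subsets. By hypothesis the net $(f_F)_F$ converges to $f$ in the Hilbert-module norm; if $I$ is countable with a chosen enumeration, I use ordinary partial sums instead.

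For each finite $F$ containing $j$, I compute $\langle f_F,h_j\rangle_{\mathcal A}$ using additivity in the first argument and property (iv) of Definition~\ref{def:HM}:
\[
\langle f_F,h_j\rangle_{\mathcal A}=\sum_{i\in F}a_i\,\langle h_i,h_j\rangle_{\mathcal A}=a_j\,1_{\mathcal A}=a_j,
\]
since $\langle h_i,h_j\rangle_{\mathcal A}=0$ for $i\neq j$ and $\langle h_j,h_j\rangle_{\mathcal A}=1_{\mathcal A}$. So the net $\langle f_F,h_j\rangle_{\mathcal A}$ is eventually constant and equal to $a_j$.

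Next I pass to the limit. The map $g\mapsto\langle g,h_j\rangle_{\mathcal A}$ is continuous by the standing assumption; in the concrete setting $\mathcal A=\mathbb{M}_m(\mathbb{R})$ this is Proposition~\ref{P1} combined with the norm equivalence from Proposition~\ref{prop:HM-module}. Hence
\[
\langle f,h_j\rangle_{\mathcal A}=\lim_F\langle f_F,h_j\rangle_{\mathcal A}=a_j.
\]
In the abstract case I can also use the module Cauchy--Schwarz inequality $\|\langle x,y\rangle\|\le\|x\|\,\|y\|$, which gives $\|\langle f-f_F,h_j\rangle_{\mathcal A}\|\le\|f-f_F\|\,\|h_j\|\to0$.

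The main subtlety is not computational. It is making sure the mode of convergence (unconditional over $I$ versus ordered partial sums) matches the one for which continuity is applied, and noting that linearity of the inner product in the first argument holds over finite sums. Once the partial-sum net is set up so that $j$ eventually belongs to $F$, everything else follows from orthonormality.
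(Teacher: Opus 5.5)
Your proposal is correct and follows the same route as the paper: fix $j$, pair the expansion with $h_j$, and use $\mathcal A$-linearity in the first argument together with orthonormality to obtain $a_j$. You then pass to the limit using continuity of the inner product. Your version is a bit more careful than the paper's one-line computation, since you make the partial-sum limit explicit (including the unordered-sum case) and you note that continuity in the first variable also follows from the module Cauchy--Schwarz inequality.
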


\begin{proof}
Fix $j\in I$. By the continuity assumption in the statement (which is satisfied in our concrete setting by Proposition~2.8) and $\mathcal A$-linearity in the first variable,
\[
\langle f,h_j\rangle_{\mathcal A}
=\Big\langle \sum_{i\in I} a_i h_i,\ h_j\Big\rangle_{\mathcal A}
=\sum_{i\in I} a_i\,\langle h_i,h_j\rangle_{\mathcal A}
=a_j\,1_{\mathcal A}=a_j.
\]
\end{proof}

\begin{remark}[About $\ell^2(\mathbb M_m(\mathbb R))$]\label{rem:l2A}
In the finite-dimensional case $\mathcal A=\mathbb M_m(\mathbb R)$, any two norms
(on matrices and on coefficient sequences) are equivalent.
Thus, after establishing \eqref{eq:AONB-reconstruction}, one can deduce that
the coefficient family $\{\langle f,h_i\rangle_{\mathcal A}\}_{i\in I}$ belongs to
$\ell^2(\mathcal A)$ with respect to (for instance) the Frobenius norm.

\end{remark}

\begin{corollary}[Parseval-type identity and intrinsic reconstruction]\label{prop:parseval}\label{mcr}
Assume that the family $\{\Psi_{j,k}\}_{(j,k)\in\mathbb{Z}\times\mathbb{Z}^d}\subset L^2(\mathbb{R}^d,\mathbb{R}^m)$
is a Parseval $\mathbb{M}_m(\mathbb{R})$-module frame, i.e.\ for every $f\in L^2(\mathbb{R}^d,\mathbb{R}^m)$ one has the reconstruction
\begin{equation}\label{eq:parseval-recons}
f=\sum_{(j,k)\in\mathbb{Z}\times\mathbb{Z}^d}\langle f,\Psi_{j,k}\rangle_{\mathbb{M}_m(\mathbb{R})}\,\Psi_{j,k},
\end{equation}
with convergence in the Hilbert-module norm. For $f\in L^2(\mathbb{R}^d,\mathbb{R}^m)$ define
\[
C_{j,k}:=\langle f,\Psi_{j,k}\rangle_{\mathbb{M}_m(\mathbb{R})}\in\mathbb{M}_m(\mathbb{R}).
\]
Then:
\[
\sum_{(j,k)\in\mathbb{Z}\times\mathbb{Z}^d}\langle f,\Psi_{j,k}\rangle_{\mathbb{M}_m(\mathbb{R})}\,
\langle \Psi_{j,k},f\rangle_{\mathbb{M}_m(\mathbb{R})}
=\langle f,f\rangle_{\mathbb{M}_m(\mathbb{R})},
\]
and
\[
f=\sum_{(j,k)\in\mathbb{Z}\times\mathbb{Z}^d} C_{j,k}\,\Psi_{j,k},
\]
with convergence in the Hilbert-module norm. In particular, the coefficient family $\{C_{j,k}\}$ uniquely determines $f$.
\end{corollary}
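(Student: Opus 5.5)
The plan is to take the reconstruction formula \eqref{eq:parseval-recons} as the starting point and pair it against $f$ on the right, using continuity of the $\mathbb{M}_m(\mathbb{R})$-valued inner product (Proposition~\ref{P1}) to pass the infinite sum through the pairing. The second displayed identity, $f=\sum C_{j,k}\Psi_{j,k}$, is just \eqref{eq:parseval-recons} rewritten with the notation $C_{j,k}$, so nothing needs to be proved there. The content of the corollary is the matrix-valued energy identity and the uniqueness statement.

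\textbf{Step 1: the energy identity.} Fix an enumeration of the countable index set $\mathbb{Z}\times\mathbb{Z}^d$ and let $S_N=\sum_{n\le N} C_{n}\Psi_{n}$ denote the partial sums. By hypothesis $S_N\to f$ in the module norm, which is equivalent to the $L^2$ norm by the proof of Proposition~\ref{prop:HM-module}. By left $\mathbb{M}_m(\mathbb{R})$-linearity (property (iv)),
\[
\langle S_N,f\rangle_{\mathbb{M}_m(\mathbb{R})}=\sum_{n\le N} C_n\langle \Psi_n,f\rangle_{\mathbb{M}_m(\mathbb{R})}.
\]
By Proposition~\ref{P1} applied in the first slot (which is legitimate via symmetry (iii)),
\[
\|\langle S_N-f,f\rangle_{\mathbb{M}_m(\mathbb{R})}\|\le C_m\|S_N-f\|_{L^2}\|f\|_{L^2}\to0,
\]
so the left-hand side converges to $\langle f,f\rangle_{\mathbb{M}_m(\mathbb{R})}$. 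Since $C_n=\langle f,\Psi_n\rangle_{\mathbb{M}_m(\mathbb{R})}$, this is exactly the claimed identity, with the series understood as the limit of partial sums in the given enumeration.

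\textbf{Step 2: uniqueness.} If two functions $f,g$ have the same coefficient family, then applying \eqref{eq:parseval-recons} to each gives $f=\sum C_{j,k}\Psi_{j,k}=g$. Equivalently, by linearity of $f\mapsto C_{j,k}(f)$, vanishing coefficients force $f=0$. One could also argue via Step 1: vanishing coefficients give $\langle f,f\rangle_{\mathbb{M}_m(\mathbb{R})}=0$, and definiteness (ii) then yields $f=0$.

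\textbf{Main obstacle.} The only delicate point is the meaning of convergence for an unordered sum over $\mathbb{Z}\times\mathbb{Z}^d$. I would handle it by interpreting convergence along any fixed enumeration, or along finite subsets directed by inclusion; the continuity argument in Step 1 works identically in either interpretation. I would also note that each term $C_n\langle\Psi_n,f\rangle_{\mathbb{M}_m(\mathbb{R})}=C_nC_n^\top$ is positive semidefinite. Hence the partial sums form an increasing family bounded above by $\langle f,f\rangle_{\mathbb{M}_m(\mathbb{R})}$, which gives unconditional convergence of the matrix series. Taking traces recovers the scalar identity $\sum\|C_n\|_F^2=\|f\|_{L^2}^2$, consistent with Remark~\ref{rem:l2A}.
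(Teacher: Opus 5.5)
Your proposal is correct and follows the paper's proof: the energy identity comes from left $\mathbb{M}_m(\mathbb{R})$-linearity on finite partial sums plus continuity of the inner product (Proposition~\ref{P1}), and uniqueness comes from the reconstruction formula itself. One difference is that the paper cites Proposition~\ref{prop:coeff-unique} for uniqueness, which presupposes an orthonormality not assumed in this corollary, so your direct argument is the cleaner one; your positivity argument for unconditional convergence of the matrix series is also a sound addition.
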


\begin{proof}
The reconstruction formula is exactly the defining identity of a Parseval module frame.
The Parseval-type identity follows by applying left $\mathbb{M}_m(\mathbb{R})$-linearity in the first variable,
passing to finite partial sums in \eqref{eq:parseval-recons}, and using continuity of the inner product
(Proposition~\ref{P1}).
Finally, uniqueness of coefficients follows from Proposition~\ref{prop:coeff-unique}: if all coefficients vanish,
then the reconstruction gives $f=0$.
\end{proof}

The following definition corresponds to the multiresolution decomposition of the Hilbert module $(L^2(\mathbb{R}^d,\mathbb{R}^m),\langle\cdot,\cdot\rangle_{\mathbb{M}_m(\mathbb{R})})$ \cite{wood2004wavelets}. We use a $d\times d$ integer dilation matrix $A$ whose eigenvalues have modulus strictly larger than $1$ \cite{chen2008biorthogonal}.

\begin{definition}[\cite{xia1996vector}]\label{DMRA}
We say that $\Phi$ is a scaling function for a vector-valued multiresolution analysis $\{V_j\}_{j\in \mathbb{Z}}$ of $L^2(\mathbb{R}^d,\mathbb{R}^m)$ if the sequence $\{V_j\}_{j\in \mathbb{Z}}$ satisfies:
\begin{enumerate}[label=\textup{(VMRA\arabic*)}]
\item $V_{j}\subset V_{j+1}$ for all $j\in \mathbb{Z}$.
\item $\bigcap_{j\in\mathbb{Z}} V_j=\{0\}$ and $\bigcup_{j\in \mathbb{Z}} V_j$ is dense in $L^2(\mathbb{R}^d,\mathbb{R}^m)$.
\item $h\in V_j$ if and only if $h(A\cdot)\in V_{j+1}$ for all $j\in\mathbb{Z}$.
\item $h\in V_j$ if and only if $h(\cdot-k)\in V_{j}$ for all $j\in\mathbb{Z}$ and $k\in\mathbb{Z}^d$.
\item The sequence $\{\Phi(\cdot-k)\}_{k\in\mathbb{Z}^d}$ is an orthonormal basis of $V_0$.
\end{enumerate}
\end{definition}

The following proposition, stated in \cite{xia1996vector}, shows that vector-valued wavelets in $L^2(\mathbb{R},\mathbb{R}^m)$ form a broader framework than $m$-multiwavelets in $L^2(\mathbb{R})$, since they can be generated componentwise. An  "if and only if" version is proved in the sequel.

\begin{proposition}[\cite{xia1996vector}]\label{XP2}
Let $\Phi=(\phi_1,\dots,\phi_m)^{\top}$ be a vector-valued scaling function associated with a VMRA of $L^2(\mathbb{R},\mathbb{R}^m)$, and let $\Psi=(\psi_1,\dots,\psi_m)^{\top}$ be its corresponding vector-valued mother wavelet. Then the components $\phi_l$ ($l=1,\dots,m$) constitute $m$ scaling functions, and the components $\psi_l$ ($l=1,\dots,m$) constitute $m$ mother wavelets of $L^2(\mathbb{R})$.
\end{proposition}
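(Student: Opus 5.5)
The plan is to extract the scalar structure from the $\mathbb{M}_m(\mathbb{R})$-valued orthonormality entry by entry. For the scaling function, (VMRA5) says that $\{\Phi(\cdot-k)\}_{k\in\mathbb{Z}}$ is an orthonormal basis of $V_0$. In the module sense of Definition~\ref{def:mm-ons-translates} this means
\[
\langle \Phi(\cdot-k),\Phi(\cdot-u)\rangle_{\mathbb{M}_m(\mathbb{R})}=\delta_{k,u}I_m .
\]
Reading off the diagonal entries gives $\int \phi_l(x-k)\phi_l(x-u)\,dx=\delta_{k,u}$ for each $l$. So each $\{\phi_l(\cdot-k)\}_k$ is an orthonormal system in $L^2(\mathbb{R})$. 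Analogously, the wavelet $\Psi$ gives $\langle \Psi_{j,k},\Psi_{j',k'}\rangle_{\mathbb{M}_m(\mathbb{R})}=\delta_{j,j'}\delta_{k,k'}I_m$, with $\Psi_{j,k}=2^{j/2}\Psi(2^j\cdot-k)$ and dilation $A=2$. The diagonal entries then show that $\{\psi_{l,j,k}\}_{j,k}$ is orthonormal in $L^2(\mathbb{R})$ for each fixed $l$.

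Next I would build the scalar MRAs. For each $l$ set $V_j^{(l)}:=\{h_l : h\in V_j\}$, the $l$-th coordinate projection, and take its closure. The properties $V_j^{(l)}\subset V_{j+1}^{(l)}$, dilation invariance and translation invariance transfer directly from (VMRA1), (VMRA3) and (VMRA4). Density of $\bigcup_j V_j^{(l)}$ follows from density of $\bigcup_j V_j$ in $L^2(\mathbb{R},\mathbb{R}^m)$, since the coordinate map is a contraction onto $L^2(\mathbb{R})$.

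The real issue is completeness. I need $\{\phi_l(\cdot-k)\}_k$ to span $V_0^{(l)}$, and $\{\psi_{l,j,k}\}_{j,k}$ to be complete in $L^2(\mathbb{R})$. Here I use the module reconstruction. Assume $\{\Psi_{j,k}\}$ is an orthonormal basis, and take $f=g\,e_l$ with $g\in L^2(\mathbb{R})$. Then $f=\sum C_{j,k}\Psi_{j,k}$ by Corollary~\ref{mcr}, where $(C_{j,k})_{ab}=\delta_{al}\int g\,\psi_{b,j,k}$. Taking the $l$-th component yields
\[
g=\sum_{j,k}\sum_b \langle g,\psi_{b,j,k}\rangle\,\psi_{b,j,k}\cdot(\text{entries}),
\]
which mixes channels $b$. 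This is the main obstacle. I would handle it with the scalar Parseval identity: the trace of the Parseval-type identity in Corollary~\ref{mcr} gives
\[
\|g\|^2=\sum_{j,k}\sum_b |\langle g,\psi_{b,j,k}\rangle|^2 .
\]
Combined with the off-diagonal vanishing $\int\psi_{a,j,k}\psi_{b,j',k'}=0$ for $a\neq b$, this shows that the union $\bigcup_b\{\psi_{b,j,k}\}$ is an orthonormal basis of $L^2(\mathbb{R})$. That conclusion is weaker than what the statement asks for.

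To get completeness of each channel separately, I would use the diagonal structure instead. Since $V_0=\overline{\operatorname{span}}_{\mathbb{M}_m}\{\Phi(\cdot-k)\}$ and module combinations $\sum_k A_k\Phi(\cdot-k)$ have $l$-th component $\sum_k\sum_b (A_k)_{lb}\phi_b(\cdot-k)$, the space $V_0^{(l)}$ is the closed span of all $\phi_b(\cdot-k)$, independently of $l$. Each $\phi_l$ is then a generator of an orthonormal system of translates in the common space $V_0^{(l)}$. The argument reduces to showing that this common space is generated by the translates of a single $\phi_l$. For that I would compare the $l$-th diagonal entries in the Parseval identity restricted to $V_0$, which encodes the same $\ell^2$ energy. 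I expect this identification to be the delicate step, and I would fall back on reading the claim as "the components are orthonormal scaling functions and wavelets generating scalar (multi-)MRAs", in the sense of \cite{xia1996vector}.
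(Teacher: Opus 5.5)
The paper gives no proof of its own here, since the proposition is quoted from Xia--Suter. Its use in Corollary~\ref{cor}, (2)$\Rightarrow$(3), and the definition of an $m$-multiwavelet at the start of Section~\ref{sec:construct} fix what it means: the $m$ components act \emph{jointly}. That is, $\{\phi_b(\cdot-k)\}_{b,k}$ is an orthonormal basis of the scalar space $V_0=\overline{\operatorname{span}}\{\phi_b(\cdot-k)\}_{b,k}$ of an MRA, and $\{\psi_{b,j,k}\}_{b,j,k}$ is an orthonormal basis of $L^2(\mathbb{R})$.

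The genuine gap is that you read the statement channel by channel and called the joint conclusion ``weaker than what the statement asks for''. You then aimed at a per-channel claim that is false for $m\ge 2$. Your reduction, that $V_0^{(l)}$ is generated by the translates of a single $\phi_l$, cannot succeed. The off-diagonal entries of $\langle\Phi(\cdot-k),\Phi(\cdot-u)\rangle_{\mathbb{M}_m(\mathbb{R})}=\delta_{k,u}I_m$ give $\int\phi_b(x-k)\,\phi_l(x-u)\,dx=0$ for all $k,u$ when $b\neq l$. So the nonzero function $\phi_b\in V_0^{(l)}$ is orthogonal to every translate of $\phi_l$. In the same way $\psi_{b,0,0}\perp\psi_{l,j,k}$ for all $j,k$, so no single channel of wavelets is complete. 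No comparison of diagonal Parseval entries can repair this: the ``delicate step'' is not delicate but false, and the plan as written does not close.

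The remedy is that you had already proved the correct statement. In your computation with $f=g\,e_l$, the $l$-th component of $\sum_{j,k}C_{j,k}\Psi_{j,k}$ is exactly
\[
g=\sum_{j,k}\sum_{b=1}^m\langle g,\psi_{b,j,k}\rangle\,\psi_{b,j,k},
\]
with no additional ``entries'' factor. The mixing over $b$ is not an obstacle; it is precisely the $m$-multiwavelet reconstruction. Combine it with the diagonal entries (unit norm and orthogonality within a channel) and the off-diagonal entries (orthogonality across channels). This shows that $\{\psi_{b,j,k}\}_{b,j,k}$ is an orthonormal basis of $L^2(\mathbb{R})$. Likewise, your observation that $V_0^{(l)}=\overline{\operatorname{span}}\{\phi_b(\cdot-k)\}_{b,k}$ for every $l$ gives, by the same relations, that $\{\phi_b(\cdot-k)\}_{b,k}$ is an orthonormal basis of this common scalar $V_0$.

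The one MRA axiom you did not address is $\bigcap_j V_j^{(l)}=\{0\}$. It follows from the submodule structure: each $V_j$ is a closed submodule, being the closed module span of $\{2^{j/2}\Phi(2^j\cdot-k)\}_k$. Closure of $V_j$ under the matrix units $E_{ab}$ gives $g\in V_j^{(l)}\iff g\,e_1\in V_j$. Hence $g\in\bigcap_j V_j^{(l)}$ forces $g\,e_1\in\bigcap_j V_j=\{0\}$. If you drop the per-channel goal and add this, your argument is a complete proof of the proposition in the sense the paper uses it.
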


\section{From scalar wavelets to multivariate vector-valued wavelets}\label{sec:construct}
To avoid confusion over terminology, let us start by clarifying certain terms used in this article.
\vspace*{-0.4cm}
\paragraph{Multiwavelets:}
In this article, the term \textit{multiwavelets} in $L^2(\mathbb{R}^d,\mathbb{R}^m)$ refers to a componentwise approach to vector-valued wavelets: the components are multivariate scalar scaling functions and mother wavelets in $L^2(\mathbb{R}^d,\mathbb{R})$, and the system is not required to be an orthonormal basis of $L^2(\mathbb{R}^d,\mathbb{R}^m)$. This is the flexible, componentwise framework commonly used in applications.
\vspace*{-0.4cm}
\paragraph{\( m \)-Multiwavelet:}
An \emph{$m$-multiwavelet} in $L^2(\mathbb R)$ consists of $m$ scaling functions $(\phi_1,\dots,\phi_m)$ and $m$ mother wavelets $(\psi_1,\dots,\psi_m)$ whose integer translates and dyadic dilates form an orthonormal basis of $L^2(\mathbb R)$. In our setting, the components of the vector-valued generators $(\Phi,\Psi)$ constructed below provide such an induced $m$-multiwavelet system, which is the input for the tensor-product step in dimension $d\ge2$.

We now describe the constructive route from a scalar wavelet basis to a multivariate vector-valued one.
Before turning to the concrete wavelet construction, we record a structural principle that will be used repeatedly: a Hilbert-orthogonal basis that is simultaneously $\mathcal A$-orthonormal yields a Parseval $\mathcal A$-module frame. This statement will be promoted to a theorem later and will justify the intrinsic reconstruction formulas.

The section is organized in three steps.
In Subsection~\ref{subsec:vv-1d} we first build a vector-valued wavelet system in $L^2(\mathbb{R},\mathbb{R}^m)$ by grouping dyadic scales into blocks of length $m$ (equivalently, by working with the dilation $D=2^m$). This bundling synchronizes the $m$ channels into a single module-valued scale and leads to intrinsic matrix-valued coefficients via the $\mathbb{M}_m(\mathbb{R})$-valued inner product.
In the next subsection, we recall the standard tensor-product construction in dimension $d\ge2$, with the only nuance that the input comes from an induced $m$-multiwavelet system.
Finally, in the last subsection we combine these two ingredients to produce multivariate vector-valued wavelet bases in $L^2(\mathbb{R}^d,\mathbb{R}^m)$ together with the corresponding intrinsic reconstruction.

\begin{lemma}[Equivalence of $\tau$-norm and module norm]\label{lem:tau_module_equiv}
Let $\mathcal A$ be a finite-dimensional $C^*$-algebra and let $\tau$ be a faithful trace on $\mathcal A$.
Then there exist constants $0<c\le C$ such that for all $a\in\mathcal A$ with $a\ge 0$,
\[
c\,\|a\|\le \tau(a)\le C\,\|a\|.
\]
Consequently, for every $x$ in a Hilbert $\mathcal A$-module $(\mathbb H,\langle\cdot,\cdot\rangle_{\mathcal A})$,
\[
c\,\|x\|_{\mathcal A}^2\le \|x\|_{\tau}^2\le C\,\|x\|_{\mathcal A}^2,
\qquad
\|x\|_{\tau}^2:=\tau(\langle x,x\rangle_{\mathcal A}).
\]
\end{lemma}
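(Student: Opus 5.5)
The plan is to prove the first inequality by a compactness argument on the positive cone, and then obtain the norm equivalence by applying it to $a=\langle x,x\rangle_{\mathcal A}$.

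\emph{Step 1: setup.} Fix the $C^*$-norm $\|\cdot\|$ on $\mathcal A$ and let $\mathcal A_+$ be the set of positive elements. Consider
\[
K:=\{a\in\mathcal A_+ : \|a\|=1\}.
\]
Since $\mathcal A$ is finite-dimensional, the unit sphere is compact. Moreover $\mathcal A_+$ is closed, as it is the set of self-adjoint elements with spectrum in $[0,\infty)$, and both conditions are closed in finite dimensions. Hence $K$ is compact.

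\emph{Step 2: bounds on $K$.} A trace $\tau$ on a finite-dimensional algebra is a linear functional, hence continuous. It is positive, so $\tau\ge 0$ on $\mathcal A_+$. Faithfulness means $\tau(a)>0$ for every $a\in\mathcal A_+\setminus\{0\}$. The continuous function $\tau|_K$ is therefore strictly positive on the compact set $K$. It attains a minimum $c>0$ and a maximum $C<\infty$. If $K=\emptyset$, i.e.\ $\mathcal A=0$, the statement is trivial.

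\emph{Step 3: homogeneity.} For $a\ge 0$ with $a\neq 0$, the element $a/\|a\|$ lies in $K$. Linearity of $\tau$ then gives
\[
c\|a\|\le\tau(a)\le C\|a\|,
\]
and the case $a=0$ is immediate.

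\emph{Step 4: module norm.} For $x\in\mathbb H$, axiom (i) of Definition~\ref{def:HM} gives $a:=\langle x,x\rangle_{\mathcal A}\ge 0$. By definition $\|a\|=\|x\|_{\mathcal A}^2$ and $\tau(a)=\|x\|_\tau^2$, so substituting into the inequality of Step 3 yields the claimed equivalence.

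\emph{Main obstacle.} The only delicate point is Step 2: it needs the interpretation of ``faithful trace'' as a positive linear functional with $\tau(a)>0$ for every nonzero $a\ge 0$, together with the closedness of $\mathcal A_+$. In the concrete case $\mathcal A=\mathbb{M}_m(\mathbb{R})$ with $\tau=\mathrm{tr}$ and the operator norm, one can also avoid compactness entirely: for $a\ge 0$ with eigenvalues $\lambda_i\ge0$,
\[
\|a\|=\max_i\lambda_i\le\sum_i\lambda_i=\mathrm{tr}(a)\le m\|a\|,
\]
so $c=1$ and $C=m$. I would add this as a remark, since it agrees with the argument in the proof of Proposition~\ref{prop:HM-module}.
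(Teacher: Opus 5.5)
Your proof is correct and follows the paper's argument essentially step for step. Both use compactness of $K=\{a\in\mathcal A_+:\|a\|=1\}$, strict positivity of $\tau$ on $K$ by faithfulness (giving $c=\min_K\tau>0$ and $C=\max_K\tau$), homogeneity, and then the substitution $a=\langle x,x\rangle_{\mathcal A}$; your added details (closedness of $\mathcal A_+$, continuity of $\tau$ by finite-dimensionality, and the explicit constants $c=1$, $C=m$ for $\mathrm{tr}$ on $\mathbb{M}_m(\mathbb{R})$ with the operator norm) fill in points the paper leaves implicit.
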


\begin{proof}
Here, $a\ge 0$ means $a\in\mathcal A_+$, where $\mathcal A_+:=\{b^*b:\,b\in\mathcal A\}$.
Since $\mathcal A$ is finite-dimensional, the set
$K:=\{a\in\mathcal A_+:\|a\|=1\}$ is compact. By faithfulness, $\tau(a)>0$ for all $a\in K$, hence
$c:=\min_{a\in K}\tau(a)>0$ and $C:=\max_{a\in K}\tau(a)<\infty$. Homogeneity yields
$c\|a\|\le\tau(a)\le C\|a\|$ for all $a\ge 0$. The norm equivalence for $\|x\|_{\tau}$ and $\|x\|_{\mathcal A}$ follows by
applying the bounds to $a=\langle x,x\rangle_{\mathcal A}$.
\end{proof}

\begin{theorem}[Hilbert totality and $\mathcal A$-orthonormality $\Rightarrow$ Parseval $\mathcal A$-module frame ]
\label{thm:hilbert_onb_implies_parseval_module}
Let $\mathcal A$ be a finite-dimensional $C^*$-algebra and let $\mathbb H$ be a Hilbert $\mathcal A$-module with inner product
$\langle\cdot,\cdot\rangle_{\mathcal A}$. Fix a faithful trace $\tau$ on $\mathcal A$ and define the scalar inner product
$\langle x,y\rangle_{\tau}=\tau(\langle x,y\rangle_{\mathcal A})$ on $\mathbb H$.
Assume that $\{u_n\}_{n\in\mathbb N}\subset\mathbb H$ satisfies
\begin{itemize}
\item[(i)] $\langle u_n,u_k\rangle_{\mathcal A}=\delta_{n,k}\,1_{\mathcal A}$ for all $n,k$;
\item[(ii)] $\{u_n\}$ is total in the Hilbert space $(\mathbb H,\langle\cdot,\cdot\rangle_{\tau})$.
\end{itemize}
Then $\{u_n\}$ is a Parseval $\mathcal A$-module frame for $\mathbb H$, that is, for every $f\in\mathbb H$,
\begin{equation}\label{eq:parseval_module_recons_thm}
f=\sum_{n\in\mathbb N}\langle f,u_n\rangle_{\mathcal A}\,u_n,
\end{equation}
with convergence in the module norm (equivalently, in the norm induced by $\langle\cdot,\cdot\rangle_{\tau}$).
Moreover, the following Parseval-type identity holds:
\begin{equation}\label{eq:parseval_module_identity_thm}
\sum_{n\in\mathbb N}\langle f,u_n\rangle_{\mathcal A}\,\langle u_n,f\rangle_{\mathcal A}
=\langle f,f\rangle_{\mathcal A}.
\end{equation}
\end{theorem}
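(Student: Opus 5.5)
The plan is to imitate the classical proof that an orthonormal system which is total in a Hilbert space is an orthonormal basis. Orthogonality relations are computed at the level of $\mathcal A$. Convergence is then transferred to the scalar Hilbert space $(\mathbb H,\langle\cdot,\cdot\rangle_\tau)$ and back via Lemma~\ref{lem:tau_module_equiv}.

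\textbf{Step 1: partial sums and Bessel's inequality.} Fix $f\in\mathbb H$ and write $a_n:=\langle f,u_n\rangle_{\mathcal A}$. By property (iii) of Definition~\ref{def:HM}, $\langle u_n,f\rangle_{\mathcal A}=a_n^*$. Set $P_Nf:=\sum_{n\le N}a_nu_n$.
\begin{itemize}
\item Using left $\mathcal A$-linearity and hypothesis (i), $\langle P_Nf,u_k\rangle_{\mathcal A}=a_k$ for $k\le N$. Hence $f-P_Nf$ is $\mathcal A$-orthogonal to $u_1,\dots,u_N$.
\item Expanding $\langle f,f\rangle_{\mathcal A}$ with $f=P_Nf+(f-P_Nf)$, the cross terms vanish.
\item A direct computation gives $\langle P_Nf,P_Nf\rangle_{\mathcal A}=\sum_{n\le N}a_na_n^*$.
\end{itemize}
Together these give the key identity
\[
\langle f,f\rangle_{\mathcal A}=\sum_{n\le N}a_na_n^*+\langle f-P_Nf,\,f-P_Nf\rangle_{\mathcal A}.
\]
Since the last term lies in $\mathcal A_+$, we obtain the module Bessel inequality $\sum_{n\le N}a_na_n^*\le\langle f,f\rangle_{\mathcal A}$.

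\textbf{Step 2: convergence of the partial sums.} Apply $\tau$, which is positive on $\mathcal A_+$, to the Bessel inequality. This yields $\sum_n\tau(a_na_n^*)\le\tau(\langle f,f\rangle_{\mathcal A})<\infty$. For $M>N$, the same computation as in Step 1 gives
\[
\|P_Mf-P_Nf\|_\tau^2=\sum_{N<n\le M}\tau(a_na_n^*).
\]
So $(P_Nf)$ is Cauchy in $\|\cdot\|_\tau$. By Lemma~\ref{lem:tau_module_equiv} it is also Cauchy in the module norm, so by completeness of $\mathbb H$ it converges to some $g\in\mathbb H$ in both norms.

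\textbf{Step 3: the limit equals $f$.} By continuity of the $\mathcal A$-valued inner product (module Cauchy--Schwarz $\|\langle x,y\rangle\|\le\|x\|\|y\|$, or Proposition~\ref{P1} in the concrete case), $\langle f-g,u_k\rangle_{\mathcal A}=\lim_N\langle f-P_Nf,u_k\rangle_{\mathcal A}=0$ for every $k$. Applying $\tau$ gives $\langle f-g,u_k\rangle_\tau=0$ for all $k$. Then $f-g$ is $\tau$-orthogonal to the closed linear span of $\{u_n\}$, which is all of $\mathbb H$ by hypothesis (ii). Hence $f=g$, which proves \eqref{eq:parseval_module_recons_thm}.

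\textbf{Step 4: the Parseval identity.} From the key identity in Step 1, $\langle f,f\rangle_{\mathcal A}-\sum_{n\le N}a_na_n^*=\langle f-P_Nf,f-P_Nf\rangle_{\mathcal A}$. The norm of the right-hand side is $\|f-P_Nf\|_{\mathcal A}^2\to0$, which gives \eqref{eq:parseval_module_identity_thm}.

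I expect the main obstacle to be Step 2. The module reconstruction is not a priori known to converge, and $\mathcal A$-valued Bessel sums are only ordered in the positive cone. The device for handling this is to scalarize with the faithful positive trace $\tau$, and then to return to the module norm using the equivalence of Lemma~\ref{lem:tau_module_equiv}. A minor point is that totality is used only in the scalar sense. This suffices because $\mathcal A$-orthogonality to each $u_k$ already implies $\tau$-orthogonality.
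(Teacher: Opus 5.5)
Your proposal is correct and follows essentially the same route as the paper. Both arguments use the residual identity $\langle f-S_N,f-S_N\rangle_{\mathcal A}=\langle f,f\rangle_{\mathcal A}-\sum_{n\le N}C_nC_n^*$, show the partial sums are Cauchy, show $f-\lim S_N$ is $\mathcal A$-orthogonal to every $u_k$, invoke totality, and read off the Parseval identity from the residual identity.

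The differences are only cosmetic. You get the Cauchy property by applying $\tau$ to the Bessel sums and transferring back through Lemma~\ref{lem:tau_module_equiv}, whereas the paper uses norm convergence of the Loewner-increasing, bounded sequence $\sum_{n\le N}C_nC_n^*$ in $\mathcal A$. You conclude $f=g$ directly from $\tau$-orthogonality in the scalar Hilbert space, whereas the paper transfers density to the module norm and uses continuity of $y\mapsto\langle f-Tf,y\rangle_{\mathcal A}$.
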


\begin{proof}
Fix $f\in\mathbb H$ and write $C_n=\langle f,u_n\rangle_{\mathcal A}\in\mathcal A$.
For $N\ge 1$ set $S_N=\sum_{n=1}^N C_n u_n\in\mathbb H$.
By Lemma~\ref{lem:tau_module_equiv}, $\|\cdot\|_{\tau}$ is equivalent to the module norm, hence $\mathbb H$ is complete for
$\|\cdot\|_{\tau}$.

\smallskip\noindent
\textbf{Step 1: residual identity.}
Using $\mathcal A$-linearity in the first argument, the relation
$\langle x,y\rangle_{\mathcal A}=\langle y,x\rangle_{\mathcal A}^{*}$, and (i), we obtain
\begin{align*}
\langle f-S_N,f-S_N\rangle_{\mathcal A}
&=\langle f,f\rangle_{\mathcal A}
-\sum_{n=1}^N\langle f, C_n u_n\rangle_{\mathcal A}
-\sum_{n=1}^N\langle C_n u_n,f\rangle_{\mathcal A}
+\sum_{n,k=1}^N\langle C_n u_n, C_k u_k\rangle_{\mathcal A} \\
&=\langle f,f\rangle_{\mathcal A}-\sum_{n=1}^N C_n C_n^{*}.
\end{align*}
Here we used the left-module structure to write
\[
\langle f, C_n u_n\rangle_{\mathcal A}
\;=\;\langle C_n u_n,f\rangle_{\mathcal A}^{*}
\;=\;(C_n\langle u_n,f\rangle_{\mathcal A})^{*}
\;=\;C_n C_n^{*},
\]
and
\[
\langle C_n u_n, C_k u_k\rangle_{\mathcal A}
\;=\;C_n\langle u_n, C_k u_k\rangle_{\mathcal A}
\;=\;C_n\,\delta_{n,k}\,C_k^{*},
\]
so the double sum reduces to $\sum_{n=1}^N C_n C_n^{*}$.
In particular, the sequence $\sum_{n=1}^N C_n C_n^{*}$ is increasing in the Loewner order and bounded above by
$\langle f,f\rangle_{\mathcal A}$.

\smallskip\noindent
\textbf{Step 2: convergence of the series and reconstruction.}
For $M<N$, orthonormality gives
\[
\langle S_N-S_M,S_N-S_M\rangle_{\mathcal A}
=\sum_{n=M+1}^N\sum_{k=M+1}^N C_n\,\langle u_n,u_k\rangle_{\mathcal A}\,C_k^{*}
=\sum_{n=M+1}^N C_n C_n^{*},
\]
so $\{S_N\}$ is Cauchy in the module norm because $\sum_{n=1}^N C_n C_n^{*}$ is increasing and bounded above by
$\langle f,f\rangle_{\mathcal A}$. Since $\mathcal A$ is finite-dimensional, this monotone sequence converges in norm, hence
$S_N\to Tf$ for some $Tf\in\mathbb H$.
By continuity of the inner product in the first variable and (i), for each $k$,
\[
\langle f-Tf,u_k\rangle_{\mathcal A}
=\lim_{N\to\infty}\big(\langle f,u_k\rangle_{\mathcal A}-\langle S_N,u_k\rangle_{\mathcal A}\big)
=\lim_{N\to\infty}\big(C_k-\sum_{n=1}^N C_n\delta_{n,k}\big)=0.
\]
Hence $f-Tf$ is $\mathcal A$-orthogonal to $\mathrm{span}\{u_n\}$. By (ii), $\mathrm{span}\{u_n\}$ is dense for the
$\tau$-norm, and by Lemma~\ref{lem:tau_module_equiv} it is also dense for the module norm.
The map $y\mapsto \langle f-Tf,y\rangle_{\mathcal A}$ is continuous (Cauchy--Schwarz), so it vanishes on the closure of
$\mathrm{span}\{u_n\}$ and hence on all of $\mathbb H$. In particular,
\[
\langle f-Tf,f-Tf\rangle_{\mathcal A}=0,
\]
which implies $f-Tf=0$ and yields \eqref{eq:parseval_module_recons_thm}.

\smallskip\noindent
\textbf{Step 3: Parseval identity.}
From Step~1 and $S_N\to f$ in the module norm we get $\langle f-S_N,f-S_N\rangle_{\mathcal A}\to 0$, hence
\[
\sum_{n=1}^N C_n C_n^{*}\xrightarrow[N\to\infty]{}\langle f,f\rangle_{\mathcal A}.
\]
Since $C_n=\langle f,u_n\rangle_{\mathcal A}$ and $\langle u_n,f\rangle_{\mathcal A}=C_n^{*}$, this is
\eqref{eq:parseval_module_identity_thm}.
\end{proof}
\subsection{Vector-valued wavelets in \texorpdfstring{$L^2(\mathbb{R},\mathbb{R}^m)$}{L2(R,Rm)} by grouping dyadic scales}\label{subsec:vv-1d}

Let $\{\phi(\cdot-k)\}_{k\in\mathbb{Z}}\cup\{2^{j/2}\psi(2^j\cdot-k)\}_{j\ge 0,k\in\mathbb{Z}}$ be an orthonormal wavelet basis of $L^2(\mathbb{R})$ associated with an MRA.

Fix $m\ge 2$ and set $D=2^m$. Define the vector-valued scaling function $\Phi:\mathbb{R}\to\mathbb{R}^m$ and the vector-valued mother wavelet $\Psi:\mathbb{R}\to\mathbb{R}^m$ by
\begin{equation}\label{eq:Phi-def}\Phi(x)=\big(\phi(x),\,\psi(x),\,2^{1/2}\psi(2x),\,\dots,\,2^{(m-2)/2}\psi(2^{m-2}x)\big)^{\top},
\end{equation}
\begin{equation}\label{eq:Psi-def}
\Psi(x)=\big(2^{(m-1)/2}\psi(2^{m-1}x),\,2^{m/2}\psi(2^mx),\,\dots,\,2^{(2m-2)/2}\psi(2^{2m-2}x)\big)^{\top}.
\end{equation}
For $k\in\mathbb{Z}$ and $j\ge 0$, set
\[
\Phi_k(x)=\Phi(x-k),\qquad \Psi_{j,k}(x)=D^{j/2}\,\Psi(D^j x-k).
\]

\begin{theorem}[From scalar wavelets to vector-valued wavelets]\label{thm:vv-1d}
Let $(\phi,\psi)$ be an orthonormal wavelet basis of $L^2(\mathbb{R})$ associated with an MRA.
Fix $m\ge 2$ and set $D=2^m$. Define $\Phi,\Psi$ by \eqref{eq:Phi-def}--\eqref{eq:Psi-def}.
For $k\in\mathbb Z$ and $j\ge 0$ set
\[
\widetilde\Phi_k=\Phi(\cdot-k),
\qquad
\widetilde\Psi_{j,k}=D^{j/2}\Psi(D^j\cdot-k).
\]

Then the family
\[
\{\widetilde\Phi_k\}_{k\in\mathbb Z}\ \cup\ \{\widetilde\Psi_{j,k}\}_{j\ge 0,\ k\in\mathbb Z}
\]

\begin{itemize}
\item[(a)] is an orthogonal wavelet basis of the Hilbert space $L^2(\mathbb{R},\mathbb{R}^m)$ (for the usual scalar $L^2$ inner product); moreover $\|\widetilde\Phi_k\|_{L^2}^2=\|\widetilde\Psi_{j,k}\|_{L^2}^2=m$,
\item[(b)] is an $\mathbb{M}_m(\mathbb{R})$--orthonormal system in the Hilbert $\mathbb{M}_m(\mathbb{R})$--module
$\big(L^2(\mathbb{R},\mathbb{R}^m),\langle\cdot,\cdot\rangle_{\mathbb{M}_m(\mathbb{R})}\big)$, i.e.
\[
\big\langle \widetilde U_\alpha,\widetilde U_\beta\big\rangle_{\mathbb{M}_m(\mathbb{R})}
=\delta_{\alpha,\beta}\,I_m
\quad(\alpha,\beta\in\{(0,k)\}\cup\{(j,k):j\ge 0\}),
\]
and it is a Parseval $\mathbb{M}_m(\mathbb{R})$--module frame with the canonical reconstruction
\[
f=\sum_{k\in\mathbb Z}\big\langle f,\widetilde\Phi_k\big\rangle_{\mathbb{M}_m(\mathbb{R})}\,\widetilde\Phi_k
+\sum_{j\ge 0}\sum_{k\in\mathbb Z}\big\langle f,\widetilde\Psi_{j,k}\big\rangle_{\mathbb{M}_m(\mathbb{R})}\,\widetilde\Psi_{j,k},
\qquad f\in L^2(\mathbb{R},\mathbb{R}^m),
\]
with convergence in the module norm.
Moreover, each component inherits compact support, regularity and vanishing moments from $\phi$ and $\psi$.
\end{itemize}
\end{theorem}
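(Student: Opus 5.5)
The plan is to reduce everything to the scalar orthonormal wavelet basis $\{\phi(\cdot-n)\}_{n}\cup\{\psi_{s,n}\}_{s\ge0,n\in\mathbb Z}$, where $\psi_{s,n}:=2^{s/2}\psi(2^s\cdot-n)$. Then (b) follows from Theorem~\ref{thm:hilbert_onb_implies_parseval_module} with $\tau=\mathrm{tr}$. The first step is a bookkeeping computation of the components. For $i=1,\dots,m$, the change of variables $x\mapsto D^jx-k$ with $D=2^m$ gives
\[
\big(\widetilde\Psi_{j,k}\big)_i=\psi_{\,mj+m-2+i,\;2^{m-2+i}k}.
\]
Similarly, the components of $\widetilde\Phi_k$ are $\phi(\cdot-k)$, $\psi_{0,k}$, and $\psi_{s,2^{s}k}$ for $1\le s\le m-2$. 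So every component of every member of the family is, up to sign, a single element of the scalar orthonormal basis. Within one vector the components sit at pairwise distinct scales, or are $\phi$ versus $\psi$ terms.

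Orthonormality (i) then follows entrywise from \eqref{eq:mm-innerproduct}. The $(i,l)$ entry of $\langle \widetilde U_\alpha,\widetilde U_\beta\rangle_{\mathbb M_m(\mathbb R)}$ is the scalar $L^2$ pairing of the $i$-th component of $\widetilde U_\alpha$ with the $l$-th component of $\widetilde U_\beta$. This pairing is $1$ exactly when the two labels $(s,n)$ coincide, and $0$ otherwise. I will check that equal labels force $\alpha=\beta$ and $i=l$. The reason is that the scale $s$ determines both the block index $j$ and the slot $i$, because the map $(j,i)\mapsto mj+m-2+i$ is injective. For a fixed scale the translation index $n=2^{s'}k$ then determines $k$. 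This gives $\delta_{\alpha,\beta}I_m$. Part (a)'s norm claim follows by taking traces: $\|\widetilde U_\alpha\|_{L^2}^2=\mathrm{tr}\,I_m=m$. Scalar orthogonality is the trace of the off-diagonal-block identity.

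For totality (ii), I will work with the module span, since that is what Theorem~\ref{thm:hilbert_onb_implies_parseval_module} effectively uses through $\mathcal A$-linearity. For each matrix unit $E_{rl}$, the function $E_{rl}\widetilde U_\alpha$ places the $l$-th component of $\widetilde U_\alpha$ into slot $r$. Hence the closed module span is $\overline{\operatorname{span}}(\mathcal S)^{\,m}$, where $\mathcal S$ is the set of all scalar components produced above. Totality in $L^2(\mathbb R,\mathbb R^m)$ is therefore equivalent to totality of $\mathcal S$ in $L^2(\mathbb R)$. Once that holds, Theorem~\ref{thm:hilbert_onb_implies_parseval_module} yields the reconstruction formula and the Parseval identity. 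The regularity, support and moment statements are immediate, because each component is a dilate and translate of $\phi$ or $\psi$.

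The main obstacle is exactly this totality step. From the computation above, the translation indices that occur at scale $s\ge1$ are only the multiples of a power of $2$, not all of $\mathbb Z$. So I must check carefully whether $\mathcal S$ really exhausts $\{\psi_{s,n}\}$. If it does not, there are two repairs to try. The first is to enlarge the translation set of each block, taking $k$ over $D^{-j}$-refined shifts so that slot $i$ sees every $n\in\mathbb Z$. The second is to reinterpret (a) as a statement about the module span. Either way, the proof of the totality claim reduces to the statement that the component labels form a bijection onto the scalar index set.
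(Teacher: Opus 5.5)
\textbf{There is a genuine gap, and it cannot be closed.} It is exactly the totality step you left open: for the family as defined, the claim is false.

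\textbf{Part (b) fails.} Your change of variables is correct. It shows that the label map $(\alpha,i)\mapsto(s,n)$ is injective but not onto. At scale $s=1$ only the translates $\psi_{1,2k}$ occur: slot $3$ of $\widetilde\Phi_k$ if $m\ge 3$, slot $1$ of $\widetilde\Psi_{0,k}$ if $m=2$. Hence $f=(\psi_{1,1},0,\dots,0)^\top\neq 0$ satisfies $\langle f,\widetilde U_\alpha\rangle_{\mathbb{M}_m(\mathbb{R})}=0_m$ for every $\alpha$. The reconstruction formula in (b) therefore returns $0$.

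\textbf{Neither of your repairs saves the statement.} In the wavelet block a single shift parameter acts on slot $i$ with step $2^{m-2+i}$. A shift set $K$ would therefore need $2^{m-1}K=\mathbb{Z}=2^{m}K$, which is impossible. Refining $K$ until slot $m$ sees every integer puts non-integer translates into slot $1$, and these destroy $\mathbb{M}_m(\mathbb{R})$-orthonormality (for Haar, $\langle\psi,\psi(\cdot-\tfrac12)\rangle_{L^2}=-\tfrac12$).

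\textbf{Part (a), read in the scalar Hilbert sense, fails independently of the translation issue.} For $f=(\phi,0,\dots,0)^\top$ the only nonzero scalar coefficient is $\langle f,\widetilde\Phi_0\rangle_{L^2}=1$. So the Bessel sum $\sum_\alpha|\langle f,\widetilde U_\alpha\rangle_{L^2}|^2/m$ equals $1/m<\|f\|_{L^2}^2$. More generally, no family with $\langle u_n,u_k\rangle_{\mathbb{M}_m(\mathbb{R})}=\delta_{n,k}I_m$ is scalar-total when $m\ge 2$, since the scalar expansion would force $E_{11}u_1=u_1/m$. So your reading of hypothesis (ii) of Theorem~\ref{thm:hilbert_onb_implies_parseval_module} as density of the \emph{module} span is the only workable one. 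Your second repair amounts to changing the statement.

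\textbf{How the paper's proof handles this point.} It gets past it only through a miscomputation. It writes $\widetilde\Phi_k=(\phi_k,\psi_{0,k},\psi_{1,k},\dots,\psi_{m-2,k})^\top$ and $\widetilde\Psi_{j,k}=(\psi_{jm+m-1,k},\dots,\psi_{jm+2m-2,k})^\top$, i.e.\ translate $k$ at every scale, which your computation correctly refutes. Its ``expand each $f_r$ and regroup'' completeness step also tacitly uses an independent coefficient in each slot. That is a module (diagonal-matrix) expansion, not a scalar one.

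\textbf{What your argument does prove is a corrected theorem.} Take those blocks as the \emph{definition} of the generators. Then:
\begin{itemize}
\item the labels biject onto the scalar orthonormal basis $\{\phi_k\}\cup\{\psi_{s,k}\}_{s\ge 0}$;
\item your matrix-unit computation gives density of the module span;
\item the proof of Theorem~\ref{thm:hilbert_onb_implies_parseval_module}, which only uses that $f-Tf$ is $\mathcal A$-orthogonal to the module span, yields $\mathbb{M}_m(\mathbb{R})$-orthonormality, the reconstruction formula and the Parseval identity.
\end{itemize}
The price is that slot $i$ of the wavelet block is then shifted by $2^{-(m-2+i)}k$. The system is no longer generated by integer translates of a single $\Psi$, and (a) has to be stated for the module span.

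The parts you did establish for the original family are correct: $\mathbb{M}_m(\mathbb{R})$-orthonormality, scalar orthogonality with $\|\widetilde U_\alpha\|_{L^2}^2=m$, and the inherited support, regularity and moment properties.
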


\begin{proof}
We identify $L^2(\mathbb R,\mathbb R^m)\simeq \bigoplus_{r=1}^m L^2(\mathbb R)$.
Write the scalar orthonormal wavelet family as
$\{\phi_{k}\}_{k\in\mathbb Z}\cup\{\psi_{n,k}\}_{n\ge 0,k\in\mathbb Z}$ with
$\psi_{n,k}(x)=2^{n/2}\psi(2^n x-k)$.

\smallskip\noindent
\textbf{(1) Orthogonality in the Hilbert sense.}
By construction, each $\Phi(\cdot-k)$ consists of the block
\[
(\phi_k,\psi_{0,k},\psi_{1,k},\dots,\psi_{m-2,k})^T,
\]
and each $D^{j/2}\Psi(D^j\cdot-k)$ consists of the next $m$ dyadic wavelet levels, grouped at the $D$-scale:
\[
D^{j/2}\Psi(D^j\cdot-k)=
(\psi_{jm+m-1,k},\psi_{jm+m,k},\dots,\psi_{jm+2m-2,k})^T.
\]
Using orthonormality across translates and scales in $L^2(\mathbb R)$,
\[
\langle \Phi(\cdot-k),\Phi(\cdot-k')\rangle_{L^2(\mathbb R,\mathbb R^m)} = m\,\delta_{k,k'},
\qquad
\langle D^{j/2}\Psi(D^j\cdot-k),D^{j'/2}\Psi(D^{j'}\cdot-k')\rangle_{L^2(\mathbb R,\mathbb R^m)}
= m\,\delta_{j,j'}\delta_{k,k'}.
\]
Mixed products vanish because the scales in $\Phi(\cdot-k)$ lie in
$V_0\oplus W_0\oplus\cdots\oplus W_{m-2}$ whereas those in $D^{j/2}\Psi(D^j\cdot-k)$ lie in
$\bigoplus_{n\ge m-1} W_n$.
Therefore, the family $\{\widetilde\Phi_k\}\cup\{\widetilde\Psi_{j,k}\}$ is orthogonal in the usual Hilbert space sense, and
\[
\langle \widetilde\Phi_k,\widetilde\Phi_{k'}\rangle_{L^2}=m\,\delta_{k,k'},
\qquad
\langle \widetilde\Psi_{j,k},\widetilde\Psi_{j',k'}\rangle_{L^2}=m\,\delta_{j,j'}\delta_{k,k'}.
\]

\smallskip\noindent
\textbf{(2) Completeness in the Hilbert sense.}
Given $f=(f_1,\dots,f_m)^T\in L^2(\mathbb R,\mathbb R^m)$, expand each component $f_r$ in the scalar wavelet basis,
then regroup the dyadic levels $n\ge 0$ into the disjoint blocks
\[
\{0,1,\dots,m-2\}\quad\text{and}\quad \{jm+m-1,\dots,jm+2m-2\}\ (j\ge 0).
\]
Since distinct wavelet subspaces are orthogonal, the corresponding partial sums associated with disjoint unions of levels
are mutually orthogonal. By the Pythagorean theorem, regrouping terms does not change the $L^2$-limit of the series, and it
therefore yields an expansion of $f$ in the stated vector generators.
Hence the family is an orthogonal basis of $L^2(\mathbb R,\mathbb R^m)$ (with $\|\widetilde\Phi_k\|_{L^2}^2=\|\widetilde\Psi_{j,k}\|_{L^2}^2=m$).
In particular, it is total in $L^2(\mathbb R,\mathbb R^m)$.

\smallskip\noindent
\textbf{(3) $\mathbb{M}_m(\mathbb{R})$--orthonormality and module-Parseval reconstruction.}
Recall the $\mathbb{M}_m(\mathbb{R})$--valued inner product
\[
\langle F,G\rangle_{\mathbb{M}_m(\mathbb{R})}=\int_{\mathbb R}F(x)\,G(x)^{\top}\,dx,
\qquad F,G\in L^2(\mathbb R,\mathbb R^m).
\]
Let $\widetilde U,\widetilde V$ be two generators among $\{\widetilde\Phi_k\}\cup\{\widetilde\Psi_{j,k}\}$.
Using the block structure above, one checks entrywise that the scalar orthogonality relations imply
\[
\big\langle \widetilde U,\widetilde V\big\rangle_{\mathbb{M}_m(\mathbb{R})}
=
\begin{cases}
I_m,& \widetilde U=\widetilde V,\\
0_m,& \widetilde U\neq \widetilde V,
\end{cases}
\]
i.e. the family is $\mathbb{M}_m(\mathbb{R})$--orthonormal.
Together with the Hilbert-space completeness from (2), we can apply
Theorem~\ref{thm:hilbert_onb_implies_parseval_module} to conclude that
$\{\widetilde\Phi_k\}\cup\{\widetilde\Psi_{j,k}\}$ is a Parseval
$\mathbb{M}_m(\mathbb{R})$--module frame and that the canonical reconstruction
formula in (b) holds, with convergence in the module norm, after fixing an enumeration of the double-indexed family as a single sequence $(u_n)$.

\smallskip\noindent
\textbf{(4) VMRA and inherited properties.}
Define $V_0^{\mathrm{vec}}=\overline{\mathrm{span}\{\Phi(\cdot-k)\}}$ and
$V_j^{\mathrm{vec}}=\{g:\ g(D^{-j}\cdot)\in V_0^{\mathrm{vec}}\}$.
The VMRA axioms follow from the scalar MRA and the fact that the wavelet blocks are arranged to match dilation $D$.
Componentwise support/regularity/vanishing moments are inherited from $\phi$ and $\psi$ since each component is a translate/dilate of $\phi$ or $\psi$.
\end{proof}
\begin{remark}\label{rem:vv-multiwavelet}
We emphasize that both multiwavelets and module-valued (vector-valued) wavelets are built from scalar wavelet data.
The module framework used here allows coefficient extraction and reconstruction to be carried out
intrinsically via the $\mathbb{M}_m(\mathbb{R})$-valued inner product on $L^2(\mathbb{R},\mathbb{R}^m)$,
producing matrix-valued coefficients that encode inter-channel correlations and the vector geometry.
\end{remark}
\begin{corollary}\label{cor}
Consider the following classes of wavelet bases:
\begin{enumerate}
    \item A wavelet basis of \(L^2(\mathbb{R})\) associated with an MRA.
    \item A vector-valued wavelet basis of \(L^2(\mathbb{R}, \mathbb{R}^m)\) (\(m \ge 2\)) associated with a VMRA.
    \item An \(m\)-multiwavelet basis of \(L^2(\mathbb{R})\) associated with an MRA.
\end{enumerate}
Then
\[
  \textup{(1)} \;\implies\; \textup{(2)}
  \quad\text{and}\quad
  \textup{(2)} \;\iff\; \textup{(3)}.
\]
In particular, any basis in \textup{(1)} yields bases in \textup{(2)} and \textup{(3)}, and the construction preserves regularity, vanishing moments, and compact support.
\end{corollary}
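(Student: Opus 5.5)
The plan is to establish the three implications separately. Throughout we use Theorem~\ref{thm:vv-1d} and Proposition~\ref{XP2}, and we identify $L^2(\mathbb R,\mathbb R^m)\simeq\bigoplus_{r=1}^m L^2(\mathbb R)$ as in the proof of Theorem~\ref{thm:vv-1d}.

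\emph{(1)$\Rightarrow$(2).} This is exactly the content of Theorem~\ref{thm:vv-1d}. Given $(\phi,\psi)$, we define $\Phi,\Psi$ by \eqref{eq:Phi-def}--\eqref{eq:Psi-def} with dilation $D=2^m$. Part (a) gives Hilbert orthogonality and completeness. Part (b) gives $\mathbb M_m(\mathbb R)$-orthonormality and, through Theorem~\ref{thm:hilbert_onb_implies_parseval_module}, the Parseval module reconstruction. Step (4) of that proof supplies the VMRA: $V_0^{\mathrm{vec}}=\overline{\mathrm{span}}\{\Phi(\cdot-k)\}$, with dilation by $D$.

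The VMRA axioms still need to be spelled out, and we would verify them as follows.
\begin{itemize}
\item Nestedness and the dilation axiom: since $\Phi(D^{-1}\cdot)$ has components lying in scalar spaces $V_{-m}\oplus\cdots$, they are expressible via translates of $\Phi$. This uses the block structure, i.e.\ that $V_0\oplus W_0\oplus\cdots\oplus W_{m-2}=V_{m-1}$ in the scalar MRA, applied level by level.
\item Density and trivial intersection: these follow from the corresponding scalar properties, componentwise.
\end{itemize}
Compact support, regularity and vanishing moments are inherited because every component is a dilate or translate of $\phi$ or $\psi$.

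\emph{(2)$\Rightarrow$(3).} Given a VMRA with generators $\Phi=(\phi_1,\dots,\phi_m)^\top$ and $\Psi=(\psi_1,\dots,\psi_m)^\top$, Proposition~\ref{XP2} says the components form $m$ scaling functions and $m$ mother wavelets of $L^2(\mathbb R)$. It remains to show that their translates and dilates give an orthonormal basis of $L^2(\mathbb R)$ associated with an MRA.

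\begin{itemize}
\item Orthonormality: the condition $\langle \Phi_k,\Phi_{k'}\rangle_{\mathbb M_m(\mathbb R)}=\delta_{k,k'}I_m$, read entrywise, gives $\int\phi_i(\cdot-k)\phi_l(\cdot-k')=\delta_{il}\delta_{kk'}$. The same holds for $\Psi$ and for the mixed pairs.
\item Completeness: we expect this to be the main obstacle, since a vector-valued basis lives in $\bigoplus_r L^2(\mathbb R)$ while (3) needs a basis of a single copy. The first attempt is to define the scalar MRA by $V_j^{\mathrm{sc}}:=\overline{\mathrm{span}}\{D^{j/2}\phi_i(D^j\cdot-k)\}_{i,k}$, i.e.\ the span of all components of $V_j$, and transfer the VMRA axioms to these spaces componentwise. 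Density of $\bigcup_j V_j$ in $L^2(\mathbb R,\mathbb R^m)$ then forces density of the component spans in $L^2(\mathbb R)$.
\end{itemize}
Orthonormality, however, is only inside each fixed component slot. One must therefore either interpret the $m$-multiwavelet in the sense of the multiplicity-$m$ MRA generated by the components, following \cite{xia1996vector}, or check that the different slots are mutually orthogonal as scalar functions. If the latter fails in general, we would restrict to the convention that (3) means ``an MRA of multiplicity $m$ generated by the components''.

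\emph{(3)$\Rightarrow$(2).} Given an $m$-multiwavelet $(\phi_1,\dots,\phi_m;\psi_1,\dots,\psi_m)$ of $L^2(\mathbb R)$, we stack the components into $\Phi=(\phi_1,\dots,\phi_m)^\top$ and $\Psi=(\psi_1,\dots,\psi_m)^\top$. The scalar orthonormality across translates and scales then translates entrywise into $\mathbb M_m(\mathbb R)$-orthonormality, exactly as in step (3) of the proof of Theorem~\ref{thm:vv-1d}. Completeness in $L^2(\mathbb R,\mathbb R^m)$ follows by expanding each component in the scalar system and regrouping, as in step (2) of that proof, and we finish with Theorem~\ref{thm:hilbert_onb_implies_parseval_module}.

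The ``in particular'' clause of the corollary combines (1)$\Rightarrow$(2)$\Rightarrow$(3) with the inheritance statement of Theorem~\ref{thm:vv-1d}.
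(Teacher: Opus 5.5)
You follow the paper's route, but the step (1)$\Rightarrow$(2) through Theorem~\ref{thm:vv-1d} does not hold, and your sketch of the VMRA axioms is where it breaks. Write $\psi_{n,k}=2^{n/2}\psi(2^n\cdot-k)$.

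\textbf{Why the system is incomplete.} By \eqref{eq:Phi-def}, the translate by $k$ of the component $2^{n/2}\psi(2^n\cdot)$ is $\psi_{n,2^nk}$, not $\psi_{n,k}$. Likewise, the $s$-th component of $D^{j/2}\Psi(D^j\cdot-k)$ is $\psi_{mj+m-1+s,\,2^{m-1+s}k}$. So for $m\ge3$ the integer translates of $\Phi$ do not fill $V_0\oplus W_0\oplus\cdots\oplus W_{m-2}$, and the wavelet generators reach only translation indices in $2^{m-1+s}\mathbb{Z}$. Already for $m=2$, take the nonzero function $f=(\psi_{1,1},0)^\top$. It satisfies $\langle f,\widetilde\Phi_k\rangle_{\mathbb{M}_2(\mathbb{R})}=\langle f,\widetilde\Psi_{j,k}\rangle_{\mathbb{M}_2(\mathbb{R})}=0_2$ for all $j,k$, because the only components that occur are $\phi(\cdot-k)$, $\psi_{0,k}$, $\psi_{2j+1,2k}$ and $\psi_{2j+2,4k}$. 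Hence the system is complete in neither the Hilbert nor the module sense. This is not just a slip in the translation indices: a VMRA with dilation $D$ and $m$-component generators needs $D-1=2^m-1$ vector wavelets, not one. Also, the dilation $2^m$ clashes with the dyadic setting of (2)$\Leftrightarrow$(3), so your ``in particular'' chain would not land in class (3).

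\textbf{A route that works.} Go (1)$\Rightarrow$(3)$\Rightarrow$(2) by polyphase splitting: set $\phi_i=\sqrt m\,\phi(m\cdot-i+1)$ and $\psi_i=\sqrt m\,\psi(m\cdot-i+1)$ for $i=1,\dots,m$. Since $\{mk+i-1\}_{k,i}=\mathbb{Z}$, the family $\{\phi_i(\cdot-k)\}\cup\{2^{j/2}\psi_i(2^j\cdot-k)\}_{j\ge0}$ is the image of the scalar basis under the unitary $f\mapsto\sqrt m\,f(m\cdot)$. It is therefore a dyadic orthonormal $m$-multiwavelet with MRA $\{f(m\cdot):f\in V_j\}$. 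It keeps support, smoothness and vanishing moments, and stacking its components gives (2).

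\textbf{Repair in (3)$\Rightarrow$(2).} Expanding each component and regrouping produces an expansion with matrix coefficients, i.e.\ density of the $\mathbb{M}_m(\mathbb{R})$-module span. It does not give hypothesis (ii) of Theorem~\ref{thm:hilbert_onb_implies_parseval_module}, which is totality of the real linear span. That hypothesis never holds for an $\mathbb{M}_m(\mathbb{R})$-orthonormal family $\{U_\alpha\}$ with $m\ge2$. Such a family is Hilbert-orthogonal with $\|U_\alpha\|_{L^2}^2=m$. For $f=((U_{\alpha_0})_1,0,\dots,0)^\top$ one has $\|f\|_{L^2}^2=1$, while Parseval for an orthogonal basis would force $\|f\|_{L^2}^2=\sum_\alpha|\langle f,U_\alpha\rangle_{L^2}|^2/m=1/m$. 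Use the module-span version of the theorem instead. Its Step~2 goes through unchanged, since $\langle f-Tf,u_k\rangle_{\mathcal A}=0$ gives $\langle f-Tf,a\,u_k\rangle_{\mathcal A}=\langle f-Tf,u_k\rangle_{\mathcal A}\,a^*=0$ for every $a\in\mathcal A$.

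\textbf{Repair in (2)$\Rightarrow$(3).} Your hedge is unnecessary, and its premise contradicts your own first bullet. The off-diagonal entries of $\delta_{k,k'}I_m$ already give orthogonality across component slots. For completeness, module-closed $V_j$ (as in \cite{xia1996vector}) satisfy $V_j=(V_j^{\mathrm{sc}})^m$. Density of $\bigcup_jV_j$ then yields density of $\bigcup_jV_j^{\mathrm{sc}}$. Reading $V_1=V_0\oplus W_0$ coordinatewise shows that the translates of the $\psi_i$ span $V_1^{\mathrm{sc}}\ominus V_0^{\mathrm{sc}}$.
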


\begin{proof}
(1) \(\Rightarrow\) (2) is Theorem~\ref{thm:vv-1d}. (2) \(\Rightarrow\) (3) follows from Proposition~\ref{XP2} in \cite{xia1996vector}. Moreover, the resulting \(m\)-multiwavelet respects the MRA in the sense that
\[
V_j=\overline{\operatorname{span}\left\{\phi_i\!\left(2^{j}x-k\right):\, i\in\{1,\dots,m\},\ k\in\mathbb{Z}\right\}}
\]
form a nested sequence of subspaces of \(L^2(\mathbb{R})\).

For (3) \(\Rightarrow\) (2), let \((\phi_1,\dots,\phi_m)\) be the
\(m\)-scaling functions and \((\psi_1,\dots,\psi_m)\) the corresponding
\(m\)-mother wavelets of the multiwavelet MRA in \(L^2(\mathbb{R})\).
Define the vector-valued generators
\[
\Phi=(\phi_1,\dots,\phi_m)^T,\qquad \Psi=(\psi_1,\dots,\psi_m)^T,
\]
and set
\[
\Phi_k(x)=\Phi(x-k),\qquad \Psi_{j,k}(x)=2^{j/2}\Psi(2^j x-k).
\]
Via the componentwise identification
\(L^2(\mathbb{R},\mathbb{R}^m)\simeq \bigoplus_{r=1}^m L^2(\mathbb{R})\),
the orthonormality of \(\{\phi_i(\cdot-k)\}\cup\{2^{j/2}\psi_i(2^j\cdot-k)\}\)
for each \(i\) implies orthonormality of
\(\{\Phi_k\}_{k\in\mathbb{Z}}\cup\{\Psi_{j,k}\}_{j\ge0,k\in\mathbb{Z}}\), and
completeness follows from the completeness of the scalar expansions of each
component. Define \(V_0^{\mathrm{vec}}=\overline{\mathrm{span}\{\Phi_k\}}\) and
\(V_j^{\mathrm{vec}}=\{g:\ g(2^{-j}\cdot)\in V_0^{\mathrm{vec}}\}\). Then the VMRA axioms are inherited from the multiwavelet MRA of \((\phi_i,\psi_i)\).
Thus the vector-valued system is a VMRA wavelet basis, proving (3)
\(\Rightarrow\) (2).
\end{proof}

\begin{remark}\label{rem:m-multiwavelet-importance}
It is well known that $m$-multiwavelets are an important tool in $L^2(\mathbb{R})$.
Within our construction, for each fixed $m$ the vector-valued module method yields an $m$-multiwavelet
system that inherits regularity, compact support, and vanishing moments from the underlying scalar basis.
Classical multiwavelet constructions typically proceed via refinement equations, and achieving prescribed
properties for a given multiplicity can be technically involved.
\end{remark}
\subsection{Multivariate scalar wavelets in \texorpdfstring{$L^2(\mathbb{R}^d)$}{L2(Rd)} from m-multiwavelets }\label{subsec:multi-tensor}

We briefly recall the standard tensor-product construction that lifts an $m$-multiwavelet in $L^2(\mathbb{R})$ to a multivariate scalar wavelet system in $L^2(\mathbb{R}^d)$.
Let $\{\phi_i\}_{i=1}^m$ and $\{\psi_i\}_{i=1}^m$ be an orthonormal $m$-multiwavelet system in $L^2(\mathbb{R})$
with associated MRA spaces $\{V_j\}_{j\in\mathbb{Z}}$ and wavelet spaces $\{W_j\}_{j\in\mathbb{Z}}$, so that
$V_{j+1}=V_j\oplus W_j$ and $\overline{\bigcup_{j\in\mathbb{Z}}V_j}=L^2(\mathbb{R})$.

For $d\ge 2$, define the tensor-product spaces
\[
\mathbf{V}_j=V_j^{\otimes d}\subset L^2(\mathbb{R}^d).
\]

\begin{proposition}\label{prop:tensor-mra-multi}
The family $\{\mathbf{V}_j\}_{j\in\mathbb{Z}}$ is an MRA of $L^2(\mathbb{R}^d)$ with dyadic dilation $2$ and integer
translations. Moreover, for each $j\in\mathbb{Z}$ one has the orthogonal decomposition
\[
\mathbf{V}_{j+1}
=
\mathbf{V}_j\ \oplus\ \bigoplus_{\epsilon\in\{0,1\}^d\setminus\{0\}} \mathbf{W}_j^{(\epsilon)},
\]
where
\[
\mathbf{W}_j^{(\epsilon)}
=
\bigotimes_{\ell=1}^d X_{j}^{(\epsilon_\ell)},
\qquad
X_{j}^{(0)}=V_j,\ \ X_{j}^{(1)}=W_j.
\]
In particular,
\[
L^2(\mathbb{R}^d)
=
\mathbf{V}_0\ \oplus\ \bigoplus_{j\ge 0}\ \bigoplus_{\epsilon\in\{0,1\}^d\setminus\{0\}} \mathbf{W}_j^{(\epsilon)},
\]
and the associated orthonormal wavelet basis consists of the usual $2^d-1$ wavelet types (one for each
$\epsilon\neq 0$), obtained by tensoring the scalar scaling functions and mother wavelets componentwise.
\end{proposition}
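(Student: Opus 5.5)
The plan is to verify the MRA axioms for $\{\mathbf V_j\}$ one at a time, using the standard Hilbert tensor-product identities $(H_1\oplus H_2)\otimes K=(H_1\otimes K)\oplus(H_2\otimes K)$ and $L^2(\mathbb R)^{\otimes d}\cong L^2(\mathbb R^d)$. The orthogonal decomposition then follows by expanding a tensor power of an orthogonal sum.

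\textbf{Step 1 (nesting, dilation, translation).} Since $V_j\subset V_{j+1}$, we get $V_j^{\otimes d}\subset V_{j+1}^{\otimes d}$. Dilation: $h\in V_j$ iff $h(2\cdot)\in V_{j+1}$. On elementary tensors $h_1\otimes\cdots\otimes h_d$, the map $F\mapsto F(2\cdot)$ acts factorwise. It is a bounded invertible operator on $L^2(\mathbb R^d)$ (a multiple of a unitary), so it maps the closed span $\mathbf V_j$ onto $\mathbf V_{j+1}$. Translation by $k\in\mathbb Z^d$ is handled the same way, because it factors as a product of one-dimensional integer translations, each preserving $V_j$.

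\textbf{Step 2 (basis of $\mathbf V_0$).} The family $\{\phi_i(\cdot-k)\}_{i,k}$ is an orthonormal basis of $V_0$. Hence the products $\prod_{\ell}\phi_{i_\ell}(x_\ell-k_\ell)$, with $\mathbf i\in\{1,\dots,m\}^d$ and $k\in\mathbb Z^d$, form an orthonormal basis of $V_0^{\otimes d}$. So $\mathbf V_0$ is generated by the integer translates of $m^d$ scaling functions, which is the multiwavelet sense of an MRA.

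\textbf{Step 3 (density and trivial intersection).} This is the main obstacle. For density, elementary tensors of $L^2(\mathbb R)$ functions are total in $L^2(\mathbb R^d)$. Given $h_1\otimes\cdots\otimes h_d$, let $P_j$ be the orthogonal projection onto $V_j$. Then $P_jh_\ell\to h_\ell$, and a telescoping estimate gives $\|\otimes_\ell P_jh_\ell-\otimes_\ell h_\ell\|\le\sum_\ell\|P_jh_\ell-h_\ell\|\prod_{\ell'\neq\ell}\|h_{\ell'}\|\to0$. Hence $\bigcup_j\mathbf V_j$ is dense. For the intersection, note that $P_j^{\otimes d}$ is the orthogonal projection onto $\mathbf V_j$. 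Since $P_j\to0$ strongly as $j\to-\infty$, the same telescoping argument shows that $P_j^{\otimes d}\to0$ strongly on elementary tensors. The operators $P_j^{\otimes d}$ are uniformly bounded by $1$, so the convergence extends to all of $L^2(\mathbb R^d)$. Any $F\in\bigcap_j\mathbf V_j$ satisfies $F=P_j^{\otimes d}F\to0$, so $F=0$.

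\textbf{Step 4 (decomposition and basis).} Write $V_{j+1}=X_j^{(0)}\oplus X_j^{(1)}$ and expand the $d$-fold tensor power by distributivity:
\[
\mathbf V_{j+1}=\bigoplus_{\epsilon\in\{0,1\}^d}\bigotimes_{\ell=1}^d X_j^{(\epsilon_\ell)}.
\]
The summands are mutually orthogonal because $\langle\otimes a_\ell,\otimes b_\ell\rangle=\prod_\ell\langle a_\ell,b_\ell\rangle$, and a factor $\langle V_j,W_j\rangle=0$ appears whenever $\epsilon\ne\epsilon'$. The term $\epsilon=0$ is $\mathbf V_j$, which gives the stated formula for $\mathbf V_{j+1}$.

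Iterating from $\mathbf V_0$ gives $\mathbf V_J=\mathbf V_0\oplus\bigoplus_{0\le j<J}\bigoplus_{\epsilon\ne0}\mathbf W_j^{(\epsilon)}$. Letting $J\to\infty$ and using the density from Step 3 yields the decomposition of $L^2(\mathbb R^d)$.

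Finally, each $\mathbf W_j^{(\epsilon)}$ has an orthonormal basis of tensor products. These are built from $2^{j/2}\phi_i(2^j\cdot-k)$ in the coordinates with $\epsilon_\ell=0$ and from $2^{j/2}\psi_i(2^j\cdot-k)$ in the coordinates with $\epsilon_\ell=1$. This gives the $2^d-1$ wavelet types $\epsilon\neq0$.
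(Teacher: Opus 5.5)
Your proof is correct. For the orthogonal decomposition, the dilation and translation axioms, and density, it follows the same lines as the paper: you expand $(V_j\oplus W_j)^{\otimes d}$ by distributivity and let dilation and translation act factorwise on pure tensors. For density the paper only asserts that finite combinations of pure tensors from $\bigcup_j V_j$ are dense; your telescoping estimate supplies the missing justification.

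The genuine difference is the trivial-intersection axiom. The paper tries to deduce $\bigcap_j\mathbf V_j=\{0\}$ from $\overline{\bigcup_j\mathbf V_j}=L^2(\mathbb R^d)$. That step is a non sequitur. Density of the union concerns the end $j\to+\infty$ and says nothing by itself about the intersection. The paper's argument also never invokes the one-dimensional hypothesis $\bigcap_jV_j=\{0\}$ or any substitute for it.

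Your route is the one that actually works. You use the strong convergence $P_j\to0$ as $j\to-\infty$, identify $P_j^{\otimes d}$ as the orthogonal projection onto $\mathbf V_j$, and use the uniform bound $\|P_j^{\otimes d}\|\le 1$ to pass from pure tensors to all of $L^2(\mathbb R^d)$. On pure tensors you do not even need telescoping, since $\|\otimes_\ell P_jh_\ell\|=\prod_\ell\|P_jh_\ell\|\to0$.

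You also verify the basis axiom for $\mathbf V_0$, via integer translates of the $m^d$ products $\prod_\ell\phi_{i_\ell}(x_\ell)$, which the paper leaves implicit. One small correction: in a standard MRA, integer translations preserve $V_j$ only for $j\ge 0$. For $j<0$, $V_j$ is invariant only under $2^{|j|}\mathbb Z$. So state the translation claim for $\mathbf V_0$, which is all the standard MRA axiom requires.
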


\begin{proof}
Since $V_{j+1}=V_j\oplus W_j$ orthogonally in $L^2(\mathbb{R})$, taking $d$-fold tensor products yields
\[
\mathbf{V}_{j+1}
=
(V_j\oplus W_j)^{\otimes d}
=
\bigoplus_{\epsilon\in\{0,1\}^d}\ \bigotimes_{\ell=1}^d X_{j}^{(\epsilon_\ell)},
\]
an orthogonal direct sum because tensor products preserve orthogonality. The term corresponding to
$\epsilon=0$ is exactly $\mathbf{V}_j=V_j^{\otimes d}$; collecting the remaining $2^d-1$ terms gives the stated
decomposition.
For the MRA axioms: nestedness and translation invariance are immediate from $\mathbf{V}_j=V_j^{\otimes d}$
and the corresponding properties of $V_j$. Scaling holds because $f\in V_j$ iff $f(2\cdot)\in V_{j+1}$, and this
equivalence persists under tensor products, so $F\in \mathbf{V}_j$ iff $F(2\cdot)\in \mathbf{V}_{j+1}$. For density,
finite linear combinations of pure tensors from $\bigcup_j V_j$ are dense in $L^2(\mathbb{R}^d)$, and each such tensor
lies in some $\mathbf{V}_J$ (take $J$ larger than all indices used in its factors). For the trivial intersection,
let $F\in \bigcap_j \mathbf{V}_j$. Then for any $j$ and any $G\in \mathbf{V}_j^\perp$, we have $\langle F,G\rangle=0$.
Since $\overline{\bigcup_j \mathbf{V}_j}=L^2(\mathbb{R}^d)$, it follows that $\bigcap_j \mathbf{V}_j=\{0\}$.
Finally, summing the orthogonal decompositions over $j\ge 0$ gives the stated expansion of $L^2(\mathbb{R}^d)$.
\end{proof}
\subsection{Multivariate vector-valued wavelets in \texorpdfstring{$L^2(\mathbb{R}^d,\mathbb{R}^m)$}{L2(Rd,Rm)}}

We now combine Theorem~\ref{thm:vv-1d} with the tensor-product MRA construction in
Proposition~\ref{prop:tensor-mra-multi}. Starting from a scalar orthonormal wavelet basis $(\phi,\psi)$ of
$L^2(\mathbb{R})$, Theorem~\ref{thm:vv-1d} yields a $\mathbb{M}_m(\mathbb{R})$-orthonormal vector-valued wavelet
system $(\Phi,\Psi)$ in $L^2(\mathbb{R},\mathbb{R}^m)$ and (by Remark~\ref{rem:vv-multiwavelet}) an induced
$m$-multiwavelet MRA $\{V_j\}$ in $L^2(\mathbb{R})$. For $d=1$ this already gives the desired vector-valued
construction. Hence assume $d\ge 2$, set $\mathbf V_j=V_j^{\otimes d}\subset L^2(\mathbb{R}^d)$ as defined in
Subsection~\ref{subsec:multi-tensor}, and let
$\{g_{j,k}^{\epsilon,\alpha}\}$ be the scalar tensor-product wavelet basis provided by
Proposition~\ref{prop:tensor-mra-multi}, with $\epsilon\in\{0,1\}^d\setminus\{0\}$, $\alpha\in\{1,\dots,m\}^d$,
$k\in\mathbb{Z}^d$, and $j\ge 0$ (together with the scaling generators at $j=0$). We then group $m$ scalar
generators into $\mathbb{R}^m$-valued ones.

\begin{theorem}\label{thm:main-vv-multi}
From any scalar orthonormal wavelet basis of $L^2(\mathbb{R})$, one can construct a
$\mathbb{M}_m(\mathbb{R})$-orthonormal multivariate vector-valued wavelet basis of
$L^2(\mathbb{R}^d,\mathbb{R}^m)$. Moreover, the expansion coefficients are matrix-valued and are obtained through
the module inner product $\langle \cdot,\cdot\rangle_{\mathbb{M}_m(\mathbb{R})}$.
\end{theorem}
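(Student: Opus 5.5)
The plan is to reduce everything to Theorem~\ref{thm:hilbert_onb_implies_parseval_module}. It then suffices to exhibit a countable family in $L^2(\mathbb{R}^d,\mathbb{R}^m)$ that is (i) $\mathbb{M}_m(\mathbb{R})$-orthonormal and (ii) total in the Hilbert space $L^2(\mathbb{R}^d,\mathbb{R}^m)$. The trace $\tau=\mathrm{tr}$ is faithful on $\mathbb{M}_m(\mathbb{R})$, and by Remark~\ref{rem:vector-geometry} it induces the usual scalar $L^2$ inner product. The Parseval reconstruction with matrix coefficients $\langle f,\cdot\rangle_{\mathbb{M}_m(\mathbb{R})}$ then follows, and uniqueness of the coefficients follows from Proposition~\ref{prop:coeff-unique}.

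\textbf{Scalar input.} Start from $(\phi,\psi)$ and apply Theorem~\ref{thm:vv-1d} together with Corollary~\ref{cor}. This gives an orthonormal $m$-multiwavelet $(\phi_i,\psi_i)_{i=1}^m$ of $L^2(\mathbb{R})$ with MRA $\{V_j\}$. Proposition~\ref{prop:tensor-mra-multi} then supplies an orthonormal basis $\mathcal B$ of $L^2(\mathbb{R}^d)$ made of tensor products. It consists of the scaling functions $\phi_{\alpha_1}\otimes\cdots\otimes\phi_{\alpha_d}(\cdot-k)$ and the wavelets $g^{\epsilon,\alpha}_{j,k}$, with $j\ge0$, $k\in\mathbb{Z}^d$, $\epsilon\neq0$ and $\alpha\in\{1,\dots,m\}^d$.

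\textbf{Grouping.} The key observation is that any partition of an orthonormal basis of $L^2(\mathbb{R}^d)$ into ordered $m$-tuples $(b_1,\dots,b_m)$ gives vectors $B=(b_1,\dots,b_m)^\top$. Each such $B$ satisfies $\langle B,B'\rangle_{\mathbb{M}_m(\mathbb{R})}=(\langle b_i,b'_j\rangle)_{ij}$, which equals $I_m$ if $B=B'$ and $0_m$ otherwise. This is the same entrywise computation as in step (3) of Theorem~\ref{thm:vv-1d}.

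I would choose the partition so as to respect the structure. For the scaling part, fix the last $d-1$ multi-indices $(\alpha_2,\dots,\alpha_d)$ and $k$, and let the first index $\alpha_1$ run over $1,\dots,m$. This gives $\Phi^{\alpha'}_k(x)=\Phi(x_1-k_1)\,\prod_{\ell\ge2}\phi_{\alpha_\ell}(x_\ell-k_\ell)$, a vector times a scalar. The wavelets are grouped in the same way for each $(\epsilon,j,k,\alpha')$: the first-factor index is varied, with the first factor taken from $\{\phi_i\}$ or $\{\psi_i\}$ according to $\epsilon_1$. This choice also exhibits the generators as separable products of the one-dimensional vector generators $\Phi$ and $\Psi$ with scalar tensors, in the spirit of Proposition~\ref{prop:hadamard}.

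\textbf{Totality.} Totality in $L^2(\mathbb{R}^d,\mathbb{R}^m)$ is the step I expect to be the main obstacle, because one grouped family of $m$-tuples from $\mathcal B$ spans only a proper subspace. The resolution I would use is to take $m$ cyclic shifts of the grouping. Writing $e_r$ for the canonical basis vectors, for each $r\in\{0,\dots,m-1\}$ use the tuples $(b_{1+r},\dots,b_{m+r})$ with indices taken mod $m$; equivalently, use the generators $\Pi^r B$, where $\Pi$ is the cyclic permutation matrix. I would then check two things. First, the union over $r$ is still $\mathbb{M}_m(\mathbb{R})$-orthonormal: for $r\ne r'$ the entries of $\langle \Pi^rB,\Pi^{r'}B'\rangle$ pair distinct basis elements, so they vanish, and by Lemma~\ref{lem:equivariance} each block is still $I_m$. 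Second, the union is total: every $b\,e_i$ with $b\in\mathcal B$ is an orthogonal projection of some generator onto the $i$-th channel. Because the components are mutually orthogonal with equal norm, we have $\langle f e_i, \Pi^r B\rangle_{L^2}=\langle f,b\rangle$, which shows that the orthogonal complement of the family is $\{0\}$. If this verification fails, the fallback is the direct-sum identification $L^2(\mathbb{R}^d,\mathbb{R}^m)\simeq\bigoplus_{r}L^2(\mathbb{R}^d)$ as in the proof of Corollary~\ref{cor}, arranging one orthonormal basis of $L^2(\mathbb{R}^d)$ per channel inside the tuples.

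Once (i) and (ii) hold, Theorem~\ref{thm:hilbert_onb_implies_parseval_module} applies after fixing an enumeration of the family. This gives the module reconstruction and the Parseval identity, and hence an $\mathbb{M}_m(\mathbb{R})$-orthonormal basis in the sense of Definition~\ref{def:module-frames-bases}. Compact support, vanishing moments and regularity pass to every component, since each component is a tensor product of dilates and translates of $\phi$ and $\psi$.
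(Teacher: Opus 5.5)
There is a genuine gap, and it sits in the totality step you flagged. The cyclic-shift enlargement destroys $\mathbb{M}_m(\mathbb{R})$-orthonormality. Take the same tuple $B$ and $r\neq r'$. The $(i,j)$ entry of $\langle \Pi^rB,\Pi^{r'}B\rangle_{\mathbb{M}_m(\mathbb{R})}$ is $\langle b_{i+r},b_{j+r'}\rangle=\delta_{i+r,\,j+r'}$. By (iii)--(iv) of Definition~\ref{def:HM}, this matrix is $\Pi^r(\Pi^{r'})^{\top}=\Pi^{r-r'}\neq 0_m$. So your claim that these entries ``pair distinct basis elements'' fails when $B'=B$. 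Indeed $\Pi^rB$ lies in the module span of $B$. The enlarged family reconstructs $\sum_r\langle f,\Pi^rB\rangle_{\mathbb{M}_m(\mathbb{R})}\Pi^rB=m\,\langle f,B\rangle_{\mathbb{M}_m(\mathbb{R})}B$, so it is a tight module frame with bound $m$, not an orthonormal basis.

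It is not even Hilbert-total. The $m$ shifts of $B$ span at most an $m$-dimensional subspace of the $m^2$-dimensional space $\mathrm{span}\{b_1,\dots,b_m\}^m$. For $m=2$, the vector $(b_1,-b_2)^{\top}$ is $L^2$-orthogonal to $(b_1,b_2)^{\top}$, to $(b_2,b_1)^{\top}$, and to all other tuples. Your identity $\langle f e_i,\Pi^rB\rangle_{L^2}=\langle f,b\rangle$ only tests vectors of the special form $f e_i$. Also, being the channel-$i$ projection of a generator does not put $b\,e_i$ in the closed scalar span. The fallback of placing a full orthonormal basis of $L^2(\mathbb{R}^d)$ in each channel is impossible too: every entry of a second channel would have to be orthogonal to a complete system.

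The underlying obstruction is that hypotheses (i) and (ii) of Theorem~\ref{thm:hilbert_onb_implies_parseval_module}, with (ii) read as scalar totality, can never hold together when $m\ge 2$. Suppose $\{u_n\}$ satisfies (i) and $E_{12}$ is a matrix unit. Then $\langle E_{12}u_1,E_{12}u_1\rangle_{\mathbb{M}_m(\mathbb{R})}=E_{11}\neq 0$. On the other hand, $\langle E_{12}u_1,u_n\rangle_{\tau}=\delta_{1n}\tau(E_{12})=0$ for every trace $\tau$, since $E_{12}=E_{11}E_{12}-E_{12}E_{11}$.

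So the missing idea is to measure completeness by the $\mathbb{M}_m(\mathbb{R})$-module span. Your original single partition of $\mathcal B$ into $m$-tuples, which is essentially the paper's grouping $G^{\epsilon,l}_{j,k}$, already works. Since $E_{is}B=b_s e_i$, its module span contains every $b\,e_i$ and is dense. Module density suffices in the proof of that theorem: $f-Tf$ is orthogonal to every $Cu_n$, because $\langle f-Tf,Cu_n\rangle_{\mathcal A}=\langle f-Tf,u_n\rangle_{\mathcal A}C^{\top}=0$. Alternatively, verify reconstruction directly: the $i$-th component of $\sum_B\langle f,B\rangle_{\mathbb{M}_m(\mathbb{R})}B$ is $\sum_{b\in\mathcal B}\langle f_i,b\rangle\,b=f_i$.

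This componentwise argument is what the paper's ``grouping these $m$ scalar expansions'' actually establishes. The paper's own appeal to the theorem should likewise be read with module totality in place of (ii). Drop the cyclic shifts and argue this way.
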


\begin{proof}
By Theorem~\ref{thm:vv-1d}, the vector-valued system $(\Phi,\Psi)$ is $\mathbb{M}_m(\mathbb{R})$-orthonormal,
and its components generate an $m$-multiwavelet MRA $\{V_j\}$ in $L^2(\mathbb{R})$.
For $d\ge 2$, Proposition~\ref{prop:tensor-mra-multi} provides a scalar tensor-product wavelet basis
$\{g_{j,k}^{\epsilon,\alpha}\}$ of $L^2(\mathbb{R}^d)$ indexed by $(\epsilon,\alpha,j,k)$ as above.
Fix any bijection
\[
\{1,\dots,m\}^d \ \longleftrightarrow\ \{1,\dots,m^{d-1}\}\times\{1,\dots,m\},
\qquad
\alpha \longmapsto (l,r).
\]
For each multivariate scalar generator $g_{j,k}^{\epsilon,\alpha}$, form an $\mathbb{R}^m$-valued generator by grouping
the $m$ terms corresponding to a fixed $l$:
\[
G_{j,k}^{\epsilon,l}
=
\big(g_{j,k}^{\epsilon,(l,1)},\dots,g_{j,k}^{\epsilon,(l,m)}\big)^{T}\in L^2(\mathbb{R}^d,\mathbb{R}^m),
\]
and similarly at scale $j=0$ for the scaling generators. Orthonormality in the module sense follows entrywise:
for any two generators $G,G'$ from this family,
\[
\langle G,G'\rangle_{\mathbb{M}_m(\mathbb{R})}
=
\delta_{G,G'}\,I_m,
\]
because the entries are scalar inner products of the orthonormal basis $\{g_{j,k}^{\epsilon,\alpha}\}$.

Completeness follows because each component $f_r$ of any $f=(f_1,\dots,f_m)^T\in L^2(\mathbb{R}^d,\mathbb{R}^m)$
admits an expansion in the scalar multivariate basis. Grouping these $m$ scalar expansions yields an expansion of $f$ in
$\{G_{j,k}^{\epsilon,l}\}$.

Finally, the matrix-valued coefficients are recovered by the module inner product,
$C_{j,k}^{\epsilon,l}=\langle f, G_{j,k}^{\epsilon,l}\rangle_{\mathbb{M}_m(\mathbb{R})}$, and the reconstruction and
Parseval identities follow from Theorem~\ref{thm:hilbert_onb_implies_parseval_module}.
Concerning the properties of the constructed vector-valued wavelets, they are inherited from the initial scalar
wavelet in the same way as in the standard separable setting: compact support, regularity, and vanishing moments
propagate componentwise and through tensor products. Moreover, the construction preserves the VMRA conditions, since the
induced vector-valued spaces are obtained by combining the VMRA in $d=1$ with the tensor-product MRA in
$L^2(\mathbb{R}^d)$.
\end{proof}

\section{Examples in \texorpdfstring{$L^2(\mathbb{R}^2,\mathbb{R}^2)$}{L2(R2,R2)}}\label{sec:examples}
\noindent We present the case $d=m=2$ to keep the construction transparent. The next corollary
specializes Theorem~\ref{thm:main-vv-multi} to this setting, and we illustrate it with the Haar and
Daubechies $(\mathrm{db}4)$ bases.
\begin{corollary}\label{P}
Let $(\phi_1,\phi_2)$ and $(\psi_1,\psi_2)$ be an orthonormal $2$-multiwavelet system in $L^2(\mathbb{R})$
associated with an MRA, and set
$\phi=(\phi_1,\phi_2)^T$, $\psi=(\psi_1,\psi_2)^T$.
Then the vector-valued scaling functions $\{\Phi^i_k\}_{i\in\{1,2\},k\in\mathbb{Z}^2}$ and wavelet functions
$\{\Psi^i_{j,k}\}_{i\in\{1,\dots,6\},k\in\mathbb{Z}^2,\ j\ge 0}$ defined below form an orthogonal basis of
$L^2(\mathbb{R}^2,\mathbb{R}^2)$, where
\begin{equation*}
     \begin{cases}
        &\Phi^1_k(x,y)=(\phi_1(x-k_1)\phi_1(y-k_2),\phi_2(x-k_1)\phi_2(y-k_2))^T \\
        & \Phi^2_k(x,y)=(\phi_1(x-k_1)\phi_2(y-k_2),\phi_2(x-k_1)\phi_1(y-k_2))^T,
     \end{cases}
\end{equation*}
\begin{equation*}
     \begin{cases}
        &\Psi^1_{j,k}(x,y)=(\phi_1(2^jx-k_1)\psi_1(2^jy-k_2),\phi_2(2^jx-k_1)\psi_2(2^jy-k_2))^T \\
        & \Psi^2_{j,k}(x,y)=(\phi_1(2^jx-k_1)\psi_2(2^jy-k_2),\phi_2(2^jx-k_1)\psi_1(2^jy-k_2))^T\\
        &\Psi^3_{j,k}(x,y)=(\psi_1(2^jx-k_1)\phi_1(2^jy-k_2),\psi_2(2^jx-k_1)\phi_2(2^jy-k_2))^T \\
        & \Psi^4_{j,k}(x,y)=(\psi_1(2^jx-k_1)\phi_2(2^jy-k_2),\psi_2(2^jx-k_1)\phi_1(2^jy-k_2))^T,
\end{cases}
\end{equation*}
\begin{equation*}
\begin{cases}
   &\Psi^5_{j,k}(x,y)=(\psi_1(2^{j}x-k_1)\psi_1(2^{j}y-k_2),\psi_2(2^{j}x-k_1)\psi_2(2^{j}y-k_2))^T \\
   & \Psi^6_{j,k}(x,y)=(\psi_1(2^{j}x-k_1)\psi_2(2^{j}y-k_2),\psi_2(2^{j}x-k_1)\psi_1(2^{j}y-k_2))^T.
\end{cases}
\end{equation*}

\end{corollary}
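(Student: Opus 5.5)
The plan is to obtain the corollary as the special case $d=m=2$ of Theorem~\ref{thm:main-vv-multi}. Concretely, I would (i) identify the scalar ingredients, (ii) check orthogonality entrywise, and (iii) establish completeness.

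\textbf{Step (i): the scalar ingredients.} Since $(\phi_1,\phi_2;\psi_1,\psi_2)$ is an orthonormal $2$-multiwavelet system with MRA $\{V_j\}$, Proposition~\ref{prop:tensor-mra-multi} gives the orthonormal tensor basis of $L^2(\mathbb{R}^2)$. Its scaling part consists of the four products $\phi_a(\cdot-k_1)\phi_b(\cdot-k_2)$ with $a,b\in\{1,2\}$. At each level $j\ge 0$ it has three wavelet types $\epsilon\in\{(0,1),(1,0),(1,1)\}$, each with four index pairs $\alpha=(a,b)$. I would then make the bijection $\alpha\mapsto(l,r)$ of Theorem~\ref{thm:main-vv-multi} explicit: the displayed vectors group $(1,1)$ with $(2,2)$ (the generators $\Phi^1,\Psi^1,\Psi^3,\Psi^5$) and $(1,2)$ with $(2,1)$ (the generators $\Phi^2,\Psi^2,\Psi^4,\Psi^6$). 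This accounts for $2$ scaling types and $3\cdot 2=6$ wavelet types. I would also note that the statement omits the factor $2^{j}$ in front of the wavelet generators. For that reason I would only claim an orthogonal basis, with $\|\Psi^i_{j,k}\|^2=2\cdot 2^{-2j}$, and not an orthonormal one.

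\textbf{Step (ii): orthogonality.} Writing $\langle G,G'\rangle_{L^2}=\mathrm{tr}\,\langle G,G'\rangle_{\mathbb{M}_2(\mathbb{R})}$, each matrix entry is a scalar inner product of two tensor basis elements. By Fubini (as in Proposition~\ref{prop:hadamard}), such an entry factors into one-dimensional products of the form $\langle\phi_a,\phi_{a'}\rangle$, $\langle\phi_a,\psi_{b}\rangle$, and so on. These factors vanish unless the translation, the scale and the type all agree. Distinct generators differ in $(j,k)$, in $\epsilon$, or in the grouping class of $\alpha$. In each case at least one factor is an inner product of distinct orthonormal elements, so the diagonal (trace) entries vanish and Hilbert orthogonality follows.

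\textbf{Step (iii): completeness.} This is the step I expect to be the main obstacle. I would follow the argument of Theorem~\ref{thm:main-vv-multi}: expand each component $f_r$ of $f=(f_1,f_2)^T$ in the scalar tensor basis and regroup the terms into the vector generators. The delicate point is that, through $L^2(\mathbb{R}^2,\mathbb{R}^2)\simeq L^2(\mathbb{R}^2)\oplus L^2(\mathbb{R}^2)$, completeness requires the first coordinates of all generators to span $L^2(\mathbb{R}^2)$, and likewise for the second coordinates. I would therefore first check, slot by slot and level by level, which products $\phi_a\phi_b$, $\phi_a\psi_b$, $\psi_a\phi_b$, $\psi_a\psi_b$ occur in each coordinate, and compare this with the full list in Proposition~\ref{prop:tensor-mra-multi}. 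If each coordinate receives only half of the tensor basis, the regrouping argument does not close. In that case I would try to use the specific structure coming from Theorem~\ref{thm:vv-1d}, where $\phi_1=\phi$ and $\phi_2=\psi$, together with the relation $V_{j+1}=V_j\oplus W_j$, to re-express the missing products. If that also fails, the honest conclusion is that the corollary needs the grouping to be adjusted so that each slot sees a full scalar basis.
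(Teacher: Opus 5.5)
Your step (iii) is left as a conditional, and the slot-by-slot check you describe comes out negative. The first slot of the generators only ever contains products whose $x$-factor is $\phi_1$ or $\psi_1$: namely $\phi_1\otimes\phi_b$ at level $0$, and $\phi_1\otimes\psi_b$, $\psi_1\otimes\phi_b$, $\psi_1\otimes\psi_b$ at level $j$. The second slot only contains products whose $x$-factor is $\phi_2$ or $\psi_2$. Hence $f=(\phi_2\otimes\phi_2,0)^T\neq 0$ is orthogonal to every generator. Its pairing with $\Phi^i_k$ contains the factor $\langle\phi_2,\phi_1(\cdot-k_1)\rangle=0$. With $\Psi^1_{j,k},\Psi^2_{j,k}$ it contains $\langle\phi_2,\psi_b(2^j\cdot-k_2)\rangle=0$, and with $\Psi^3_{j,k},\dots,\Psi^6_{j,k}$ it contains $\langle\phi_2,\psi_1(2^j\cdot-k_1)\rangle=0$. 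All of these vanish by orthonormality of the multiwavelet system ($j\ge 0$).

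So, read as a claim about an orthogonal basis of the Hilbert space $L^2(\mathbb{R}^2,\mathbb{R}^2)$, which is how both you and the paper read it, the corollary is false, and step (iii) cannot be completed. Your fallback via $\phi_1=\phi$, $\phi_2=\psi$ cannot help, since the counterexample uses only the orthonormality relations. The obstruction is structural. If $\langle G,G'\rangle_{\mathbb{M}_2(\mathbb{R})}=\delta_{G,G'}c_GI_2$, then for the matrix unit $E_{12}$,
\[
\langle E_{12}G,G'\rangle_{L^2}=\mathrm{tr}\big(E_{12}\langle G,G'\rangle_{\mathbb{M}_2(\mathbb{R})}\big)=0\quad\text{for all } G',
\]
while $E_{12}G=(G_2,0)^T\neq0$; for $G=\Phi^1_0$ this is the $f$ above. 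So no family with this property, however grouped, is total in the scalar sense.

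The paper's proof asserts completeness from the identification $L^2(\mathbb{R}^2,\mathbb{R}^2)\simeq L^2(\mathbb{R}^2)\oplus L^2(\mathbb{R}^2)$, i.e.\ the regrouping argument of Theorem~\ref{thm:main-vv-multi}, without doing the check you proposed. That step is wrong. The same defect already affects Theorem~\ref{thm:vv-1d}(a), where $(\psi,0,\dots,0)^T$ is orthogonal to all generators. Your suspicion was correct.

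What your steps (i)--(ii) do establish, once coefficients may be matrices, is the module version. In step (ii), a factor like $\langle\phi_a(2^j\cdot-k),\phi_{a'}(2^{j'}\cdot-k')\rangle$ with $j\neq j'$ need not vanish. Argue instead that every entry of $\langle G,G'\rangle_{\mathbb{M}_2(\mathbb{R})}$ is an inner product of two elements of the orthogonal tensor basis of Proposition~\ref{prop:tensor-mra-multi}. By your bijection in step (i), these two elements are distinct unless $G=G'$ and the entry is diagonal. This gives $\langle G,G'\rangle_{\mathbb{M}_2(\mathbb{R})}=\delta_{G,G'}c_GI_2$, with $c_G=1$ for $\Phi^i_k$ and, as you noted, $c_G=2^{-2j}$ for $\Psi^i_{j,k}$. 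It also gives
\[
f=\sum_G c_G^{-1}\langle f,G\rangle_{\mathbb{M}_2(\mathbb{R})}\,G,\qquad f\in L^2(\mathbb{R}^2,\mathbb{R}^2),
\]
because the $r$-th row of this identity is exactly the expansion of $f_r$ in the full scalar tensor basis. The matrix coefficients are what move a product sitting in slot $2$ of a generator into slot $1$ of $f$; scalar coefficients cannot do this. For the same reason, Theorem~\ref{thm:hilbert_onb_implies_parseval_module} can only be invoked here with hypothesis (ii) replaced by density of the $\mathbb{M}_2(\mathbb{R})$-module span. Its proof goes through verbatim under that hypothesis, whereas literal scalar totality fails.
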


\begin{proof}
Let $\{g_{j,k}^{\epsilon,\alpha}\}$ be the scalar tensor-product basis of $L^2(\mathbb{R}^2)$ from
Proposition~\ref{prop:tensor-mra-multi}, applied to the MRA associated with
$(\phi_1,\phi_2)$ and $(\psi_1,\psi_2)$, where $\epsilon\in\{0,1\}^2$ and $\alpha\in\{1,2\}^2$.
Choose the grouping
\[
A_1=\{(1,1),(2,2)\},\qquad A_2=\{(1,2),(2,1)\}.
\]
For $\epsilon=(0,0)$ (scaling) and $\epsilon\neq(0,0)$ (wavelets), define the vector-valued generators by
\[
\Phi_k^l=\big(g_{0,k}^{(0,0),\alpha_{l,1}},\,g_{0,k}^{(0,0),\alpha_{l,2}}\big)^T,\qquad
\Psi_{j,k}^{\epsilon,l}=\big(g_{j,k}^{\epsilon,\alpha_{l,1}},\,g_{j,k}^{\epsilon,\alpha_{l,2}}\big)^T,
\]
where $\alpha_{l,1},\alpha_{l,2}$ are the two elements of $A_l$.
Writing $g_{j,k}^{\epsilon,\alpha}$ explicitly gives exactly the formulas displayed in the statement.
Orthogonality and completeness follow from the scalar orthonormality of $\{g_{j,k}^{\epsilon,\alpha}\}$ and the
componentwise identification $L^2(\mathbb{R}^2,\mathbb{R}^2)\simeq L^2(\mathbb{R}^2)\oplus L^2(\mathbb{R}^2)$.
Equivalently, this is Theorem~\ref{thm:main-vv-multi} specialized to $d=m=2$ with the above grouping.
\end{proof}

\begin{example}
The Haar basis is the classical orthonormal wavelet system in $L^2(\mathbb{R})$. By Theorem~\ref{thm:vv-1d}
and Corollary~\ref{cor} (with $m=2$), it induces an orthonormal $2$-multiwavelet system associated with an MRA,
so Corollary~\ref{P} yields a vector-valued wavelet basis in $L^2(\mathbb{R}^2,\mathbb{R}^2)$. The Haar scaling
function $\phi$ is the characteristic function of $[0,1]$, and the mother wavelet is
$\psi(x)=\phi(2x)-\phi(2x-1)$. With $m=2$ the dilation is $D=4$ (grouping two dyadic levels), and
Theorem~\ref{thm:vv-1d} gives the vector-valued generators

\begin{equation*}
    \Phi(\cdot)=(\phi(\cdot),\psi(\cdot))^T,\;\Psi(\cdot)=(2^{1/2}\psi(2\cdot),2\psi(4\cdot))^T
\end{equation*}

\begin{figure}[H] 
  \begin{center}
  \includegraphics[scale=0.4]{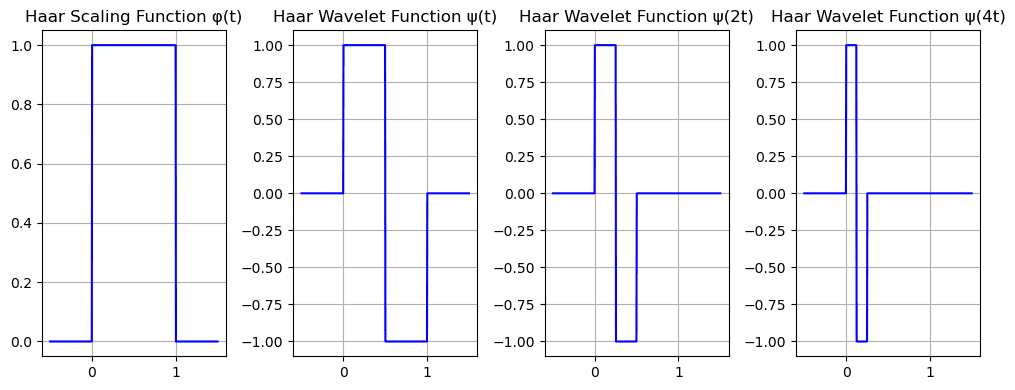}
\caption{The Haar scaling function and the Haar wavelet function in three scales $j=0,1,2$. }
    \label{fig:1}
 \end{center}
 \end{figure}

\begin{figure}[H]
  \centering
  \begin{tabular}{cc}
    \includegraphics[width=0.45\linewidth]{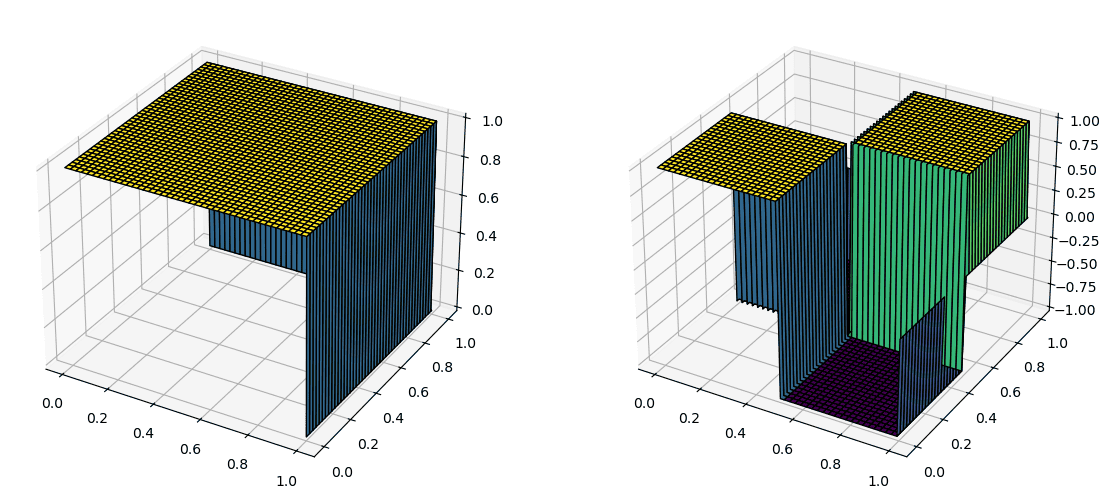} &
    \includegraphics[width=0.45\linewidth]{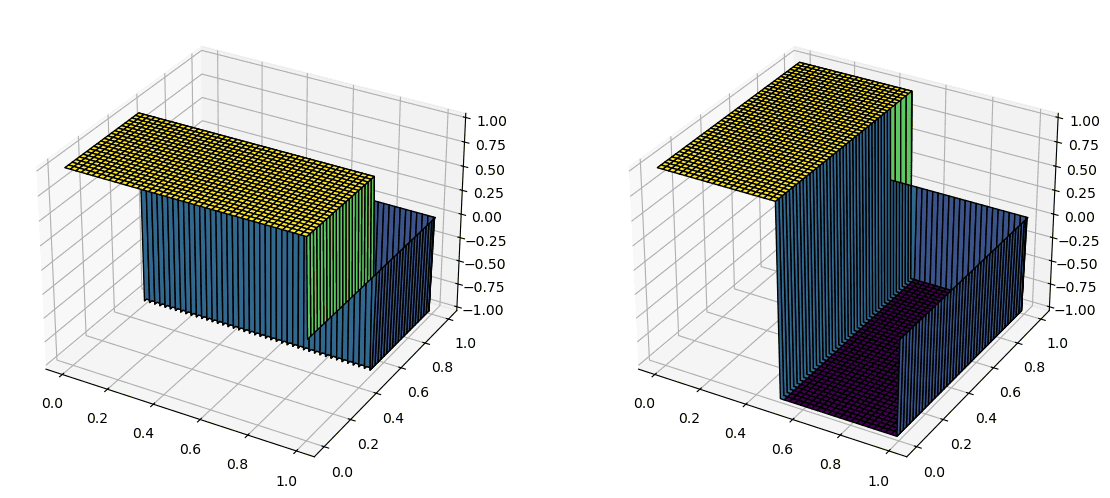} \\
  \end{tabular}
  \caption{Haar vector-valued scaling components $\Phi^1$ (left) and $\Phi^2$ (right).}
  \label{fig:haar-phi-components}
\end{figure}

\begin{figure}[H]
  \centering
  \begin{tabular}{ccc}
    \includegraphics[width=0.30\linewidth]{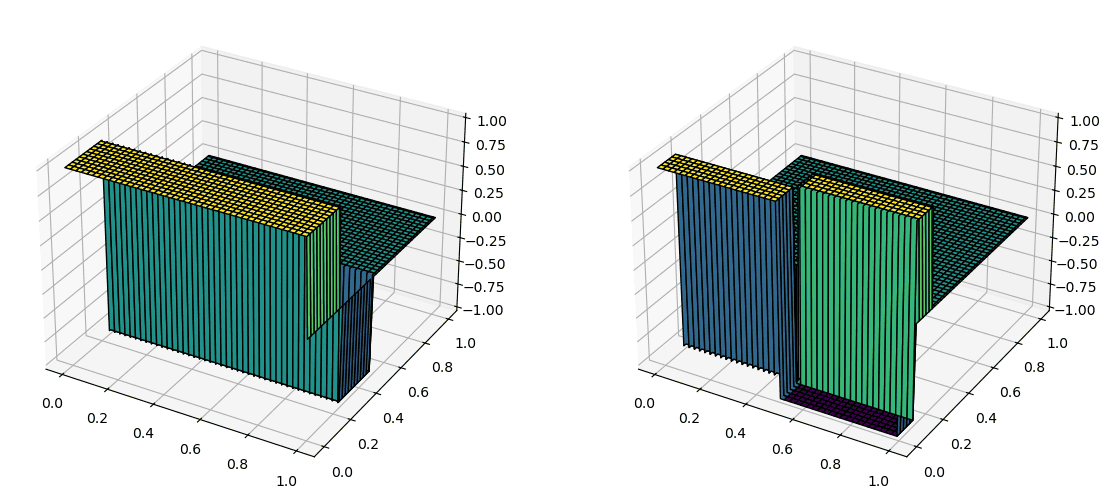} &
    \includegraphics[width=0.30\linewidth]{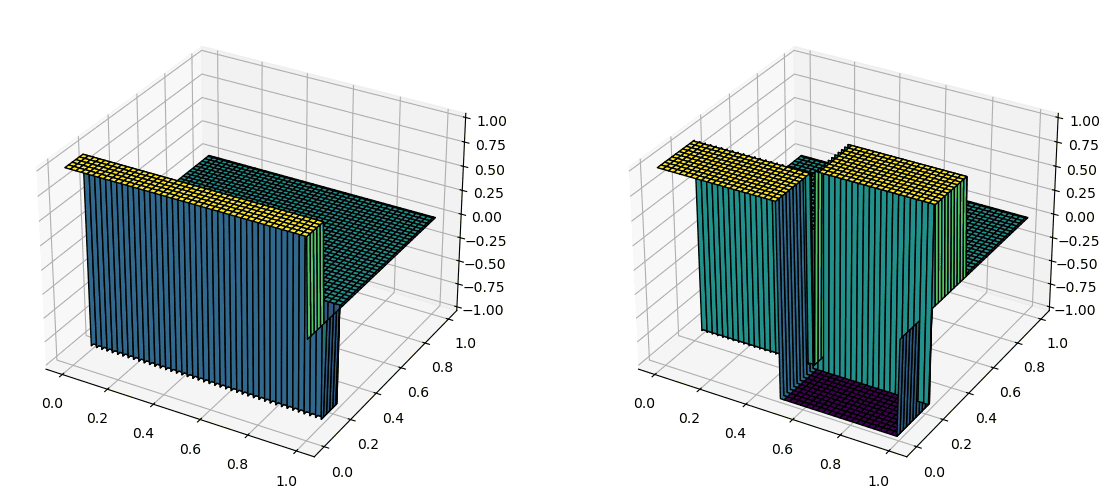} &
    \includegraphics[width=0.30\linewidth]{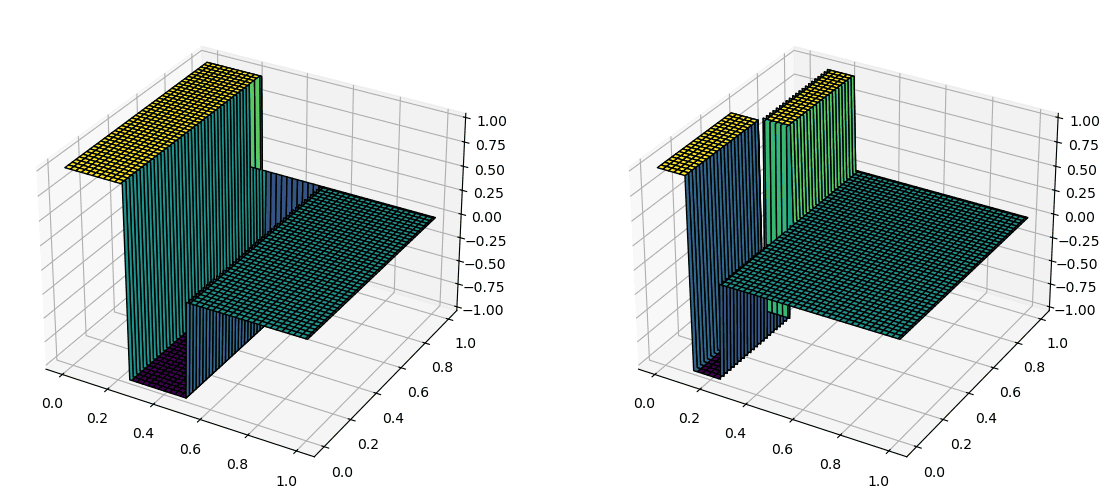} \\
    \includegraphics[width=0.30\linewidth]{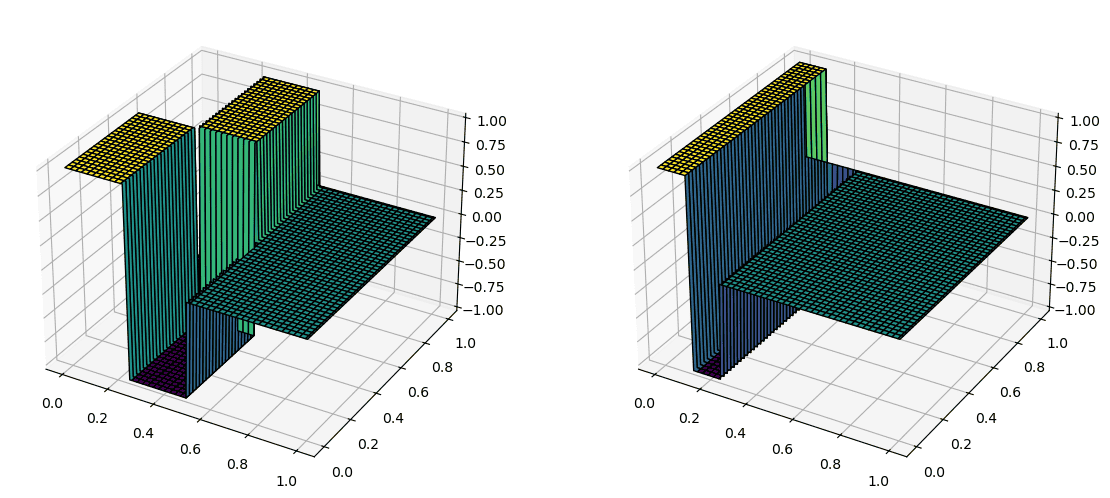} &
    \includegraphics[width=0.30\linewidth]{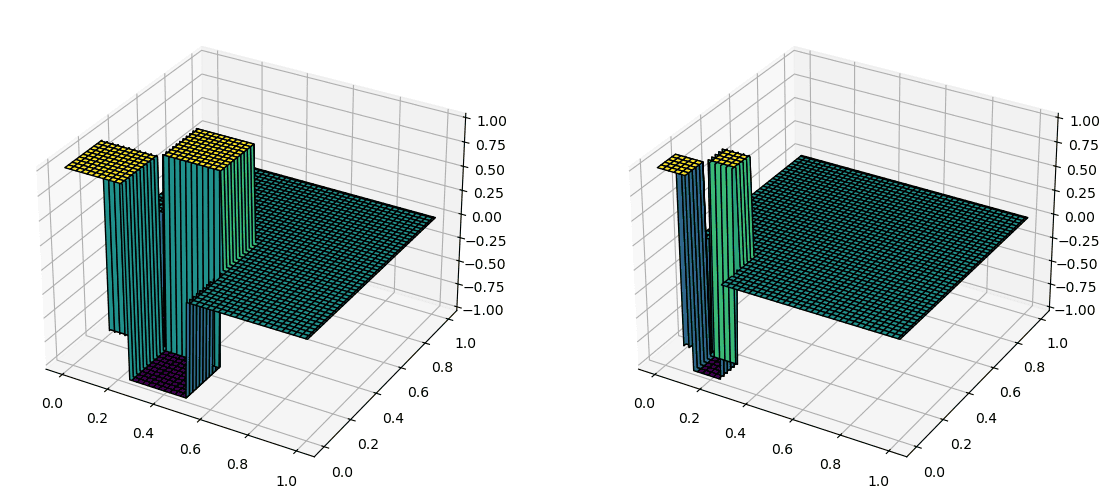} &
    \includegraphics[width=0.30\linewidth]{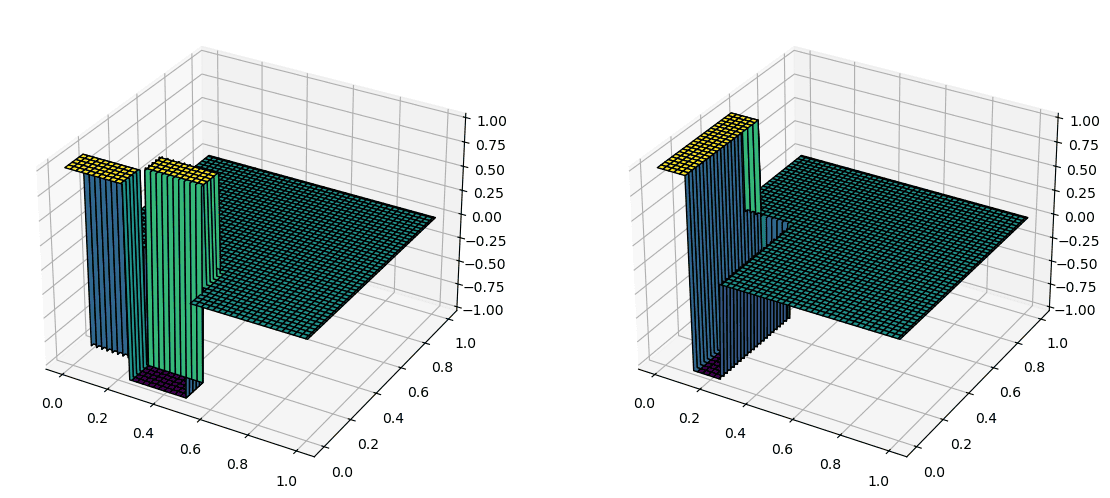} \\
  \end{tabular}
  \caption{Haar vector-valued wavelet components $\Psi^1$--$\Psi^6$.}
  \label{fig:haar-psi-components}
\end{figure}

Figure~\ref{fig:1} shows the scalar Haar scaling function and wavelet at scales $j=0,1,2$;
Figures~\ref{fig:haar-phi-components} and~\ref{fig:haar-psi-components} show the corresponding vector-valued components.

\end{example}

\begin{example}
The Daubechies \(\mathrm{db}4\) wavelet basis is a compactly supported orthonormal system in \(L^2(\mathbb{R})\)
with higher-order smoothness and vanishing moments compared to the Haar basis. By Theorem~\ref{thm:vv-1d} and
Corollary~\ref{cor} (with $m=2$), it induces an orthonormal $2$-multiwavelet system associated with an MRA, so
Corollary~\ref{P} yields a vector-valued wavelet basis in \(L^2(\mathbb{R}^2,\mathbb{R}^2)\). The scaling function
\(\phi\) is defined by its refinement equation with low-pass coefficients \((h_0,h_1,h_2,h_3)\), and the mother
wavelet \(\psi\) is generated by the corresponding high-pass filter \((g_0,g_1,g_2,g_3)\); see
\cite{daubechies1992ten}. With $m=2$ the dilation is $D=4$ (grouping two dyadic levels), and
Theorem~\ref{thm:vv-1d} gives the vector-valued generators

\begin{equation*}
    \Phi(\cdot)=(\phi(\cdot),\psi(\cdot))^T,\;\Psi(\cdot)=(2^{1/2}\psi(2\cdot),2\psi(4\cdot))^T.
\end{equation*}

\begin{figure}[H] 
    \begin{center}
    \includegraphics[scale=0.4]{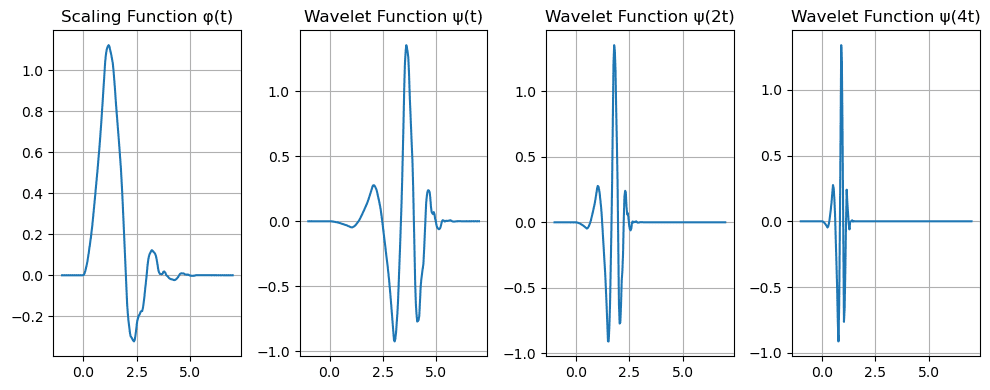}
    \caption{The Daubechies scaling function and the Daubechies wavelet function in three scales $j=0,1,2$. }
    \label{fig:10}
    \end{center}
\end{figure}

\begin{figure}[H]
  \centering
  \begin{tabular}{cc}
    \includegraphics[width=0.45\linewidth]{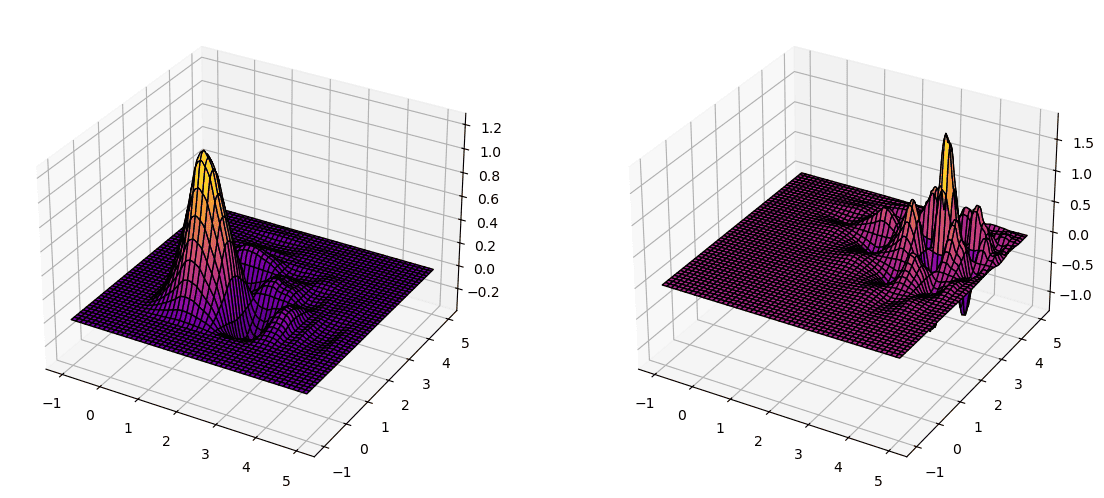} &
    \includegraphics[width=0.45\linewidth]{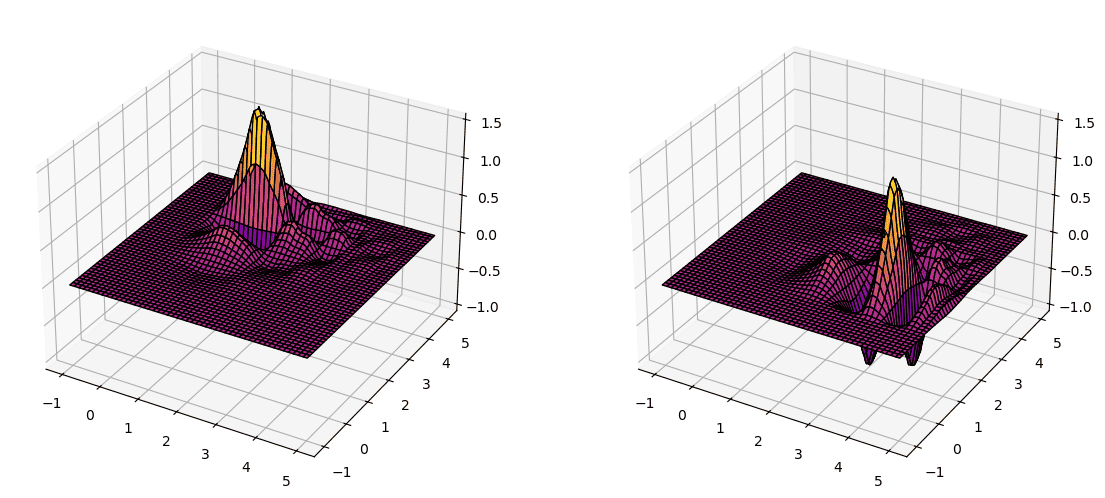} \\
  \end{tabular}
  \caption{Daubechies $(\mathrm{db}4)$ vector-valued scaling components $\Phi^1$ (left) and $\Phi^2$ (right).}
  \label{fig:db4-phi-components}
\end{figure}

\begin{figure}[H]
  \centering
  \begin{tabular}{ccc}
    \includegraphics[width=0.30\linewidth]{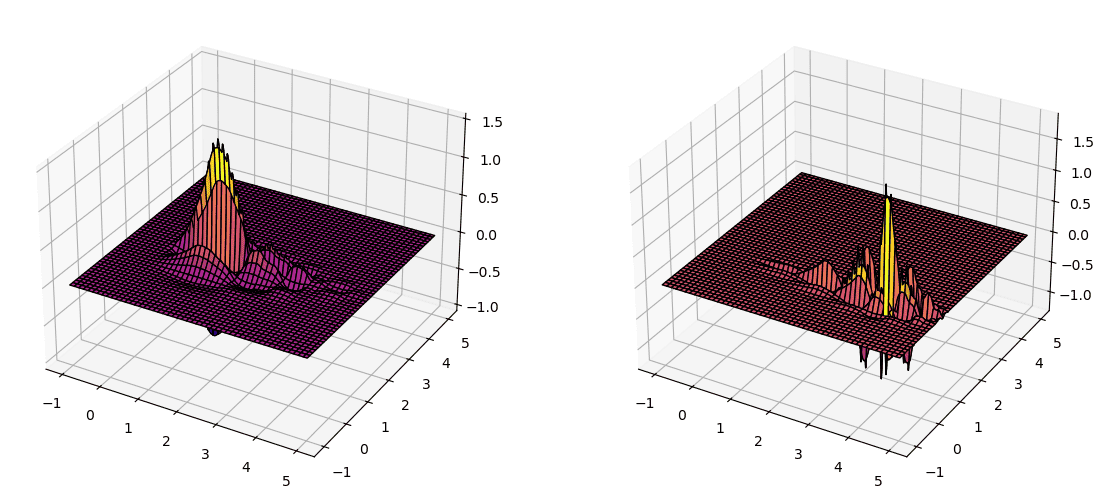} &
    \includegraphics[width=0.30\linewidth]{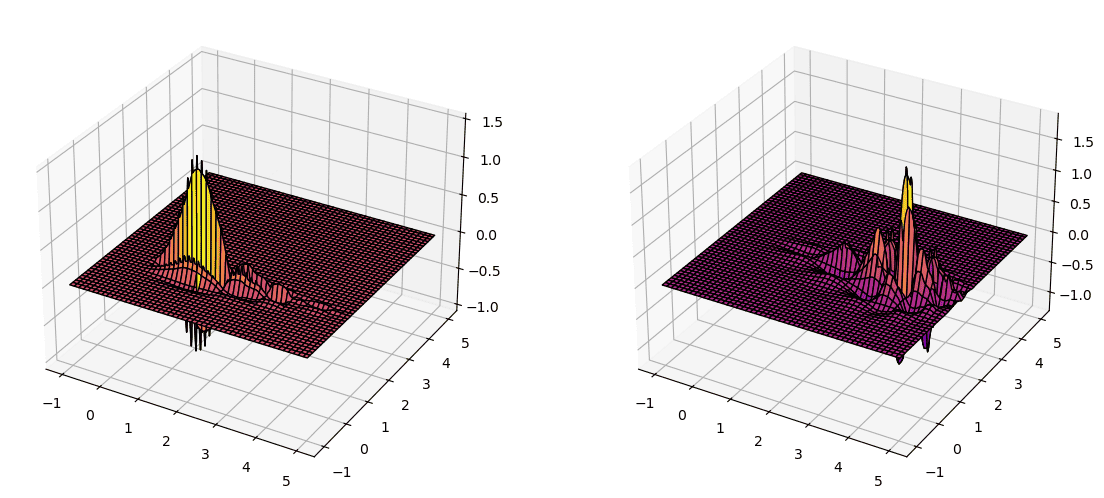} &
    \includegraphics[width=0.30\linewidth]{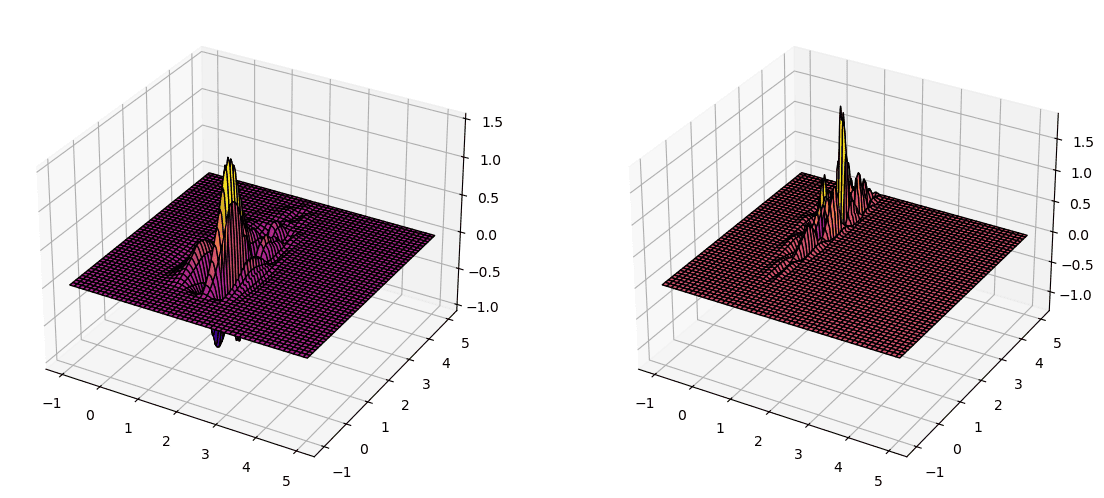} \\
    \includegraphics[width=0.30\linewidth]{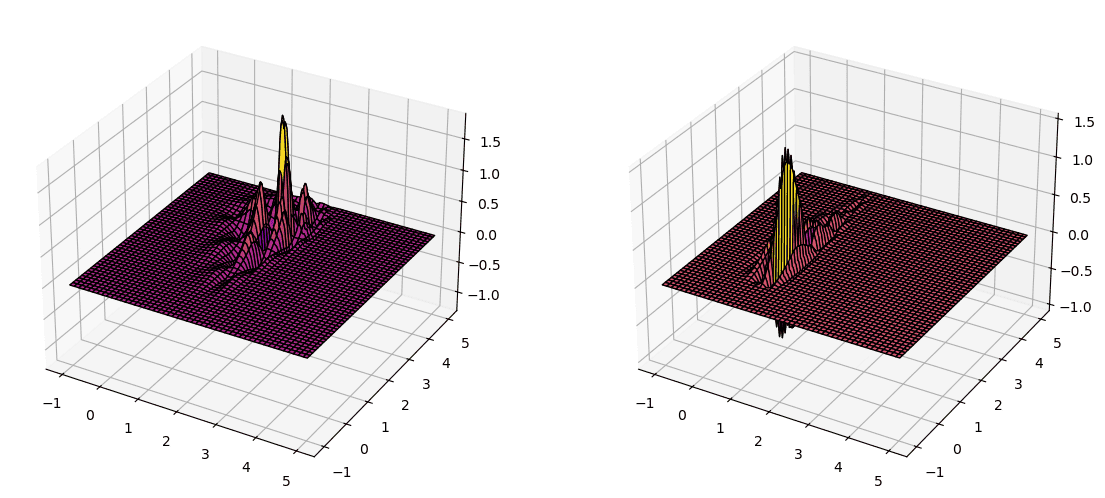} &
    \includegraphics[width=0.30\linewidth]{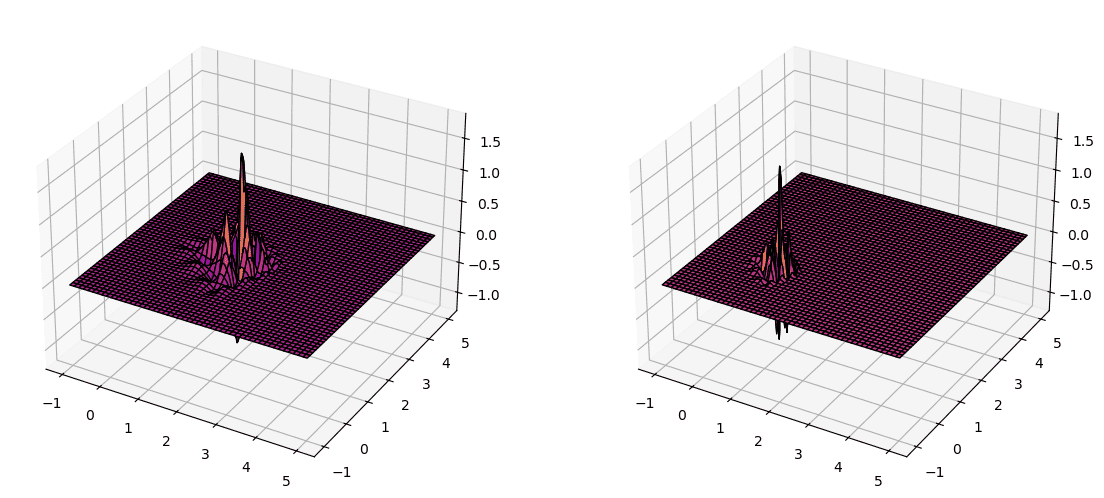} &
    \includegraphics[width=0.30\linewidth]{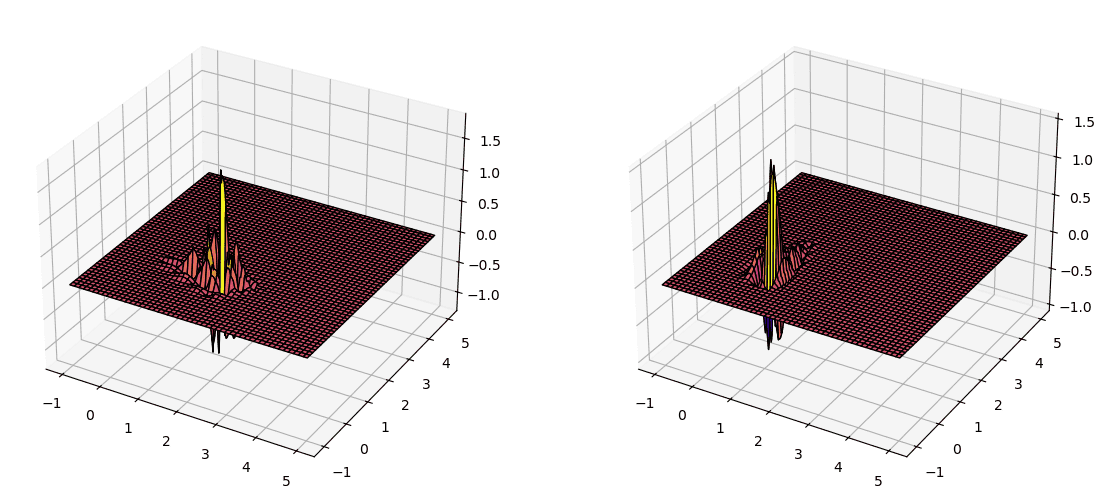} \\
  \end{tabular}
  \caption{Daubechies $(\mathrm{db}4)$ vector-valued wavelet components $\Psi^1$--$\Psi^6$.}
  \label{fig:db4-psi-components}
\end{figure}

Figure~\ref{fig:10} shows the scalar Daubechies scaling function and wavelet at scales $j=0,1,2$;
Figures~\ref{fig:db4-phi-components} and~\ref{fig:db4-psi-components} show the corresponding vector-valued components.

\end{example}
\section{Conclusion and outlook}
We developed a Hilbert-$\mathbb{M}_m(\mathbb{R})$-module framework for vector-valued wavelets in
$L^2(\mathbb{R}^d,\mathbb{R}^m)$ and used it to construct orthonormal bases directly from scalar wavelet
systems. The module inner product yields matrix-valued coefficients with intrinsic reconstruction and Parseval
identities, while the lifting procedure preserves compact support, smoothness, and vanishing moments. The resulting
vector-valued bases retain the structural information of multichannel data that is lost under purely componentwise
scalar expansions, and they avoid solving refinement equations explicitly.

Several directions merit further study. On the analytic side, the approach extends naturally to other coefficient
algebras and to frames, with potential benefits for numerical stability and redundancy control. On the constructive
side, nonseparable and anisotropic designs, as well as data-adaptive choices of groupings, could broaden the class
of vector-valued systems obtainable within this module viewpoint. Finally, numerical implementations and application
driven evaluations in imaging, geophysics, and multichannel signal processing should clarify how the matrix-valued coefficients improve feature extraction, denoising, and classification compared to scalar methods.

\section*{Declarations}

\section*{\small
 Conflict of interest} 

{\small
 The authors declare that they have no conflict of interest.}

\section*{\small
 Data availability} 

 {\small
 No data are associated with this article.}

\section*{\small
 Funding statement} 

 {\small
 This research received no external funding.}





\begin{thebibliography} {99}
\normalsize 


\bibitem{ahmad2022nonuniformsuper}Ahmad, O., AH, A. A., \& Ahmad, M. (2022). NONUNIFORM SUPER WAVELETS IN L2 (K). Problemy Analiza-Issues of Analysis, 11(1), 3-19.



\bibitem{balan1999density}Balan, R. Density and redundancy of the noncoherent Weyl-Heisenberg superframes. {\em Contemporary Mathematics}. \textbf{247} pp. 29-42 (1999)






\bibitem{bhatt2007orthogonal}Bhatt, G., Johnson, B. \& Weber, E. Orthogonal wavelet frames and vector-valued wavelet transforms. {\em Applied And Computational Harmonic Analysis}. \textbf{23}, 215-234 (2007)



\bibitem{chen2007study}Chen, Q. \& Cheng, Z. A study on compactly supported orthogonal vector-valued wavelets and wavelet packets. {\em Chaos, Solitons \& Fractals}. \textbf{31}, 1024-1034 (2007)

\bibitem{chen2008biorthogonal}Chen, Q. \& Shi, Z. Biorthogonal multiple vector-valued multivariate wavelet packets associated with a dilation matrix. {\em Chaos, Solitons \& Fractals}. \textbf{35}, 323-332 (2008)

\bibitem{chen2008biorthogonality}Chen, Q. \& Wei, Z. Biorthogonality property of vector-valued multivariate wavelet packets. {\em 2008 Fourth International Conference On Natural Computation}. pp. 471-475 (2008)





\bibitem{chui1996study}Chui, C. \& Lian, J. A study of orthonormal multi-wavelets. {\em Applied Numerical Mathematics}. \textbf{20}, 273-298 (1996)

\bibitem{cotronei2002multiwavelet}Cotronei, M., Montefusco, L. \& Puccio, L. Multiwavelet analysis and signal processing. {\em IEEE Transactions On Circuits And Systems II: Analog And Digital Signal Processing}. \textbf{45}, 970-987 (2002)


\bibitem{daubechies1988orthonormal}Daubechies, I. Orthonormal bases of compactly supported wavelets. {\em Communications On Pure And Applied Mathematics}. \textbf{41}, 909-996 (1988)


\bibitem{daubechies1992ten}Daubechies, I. Ten lectures on wavelets. (SIAM,1992)

\bibitem{dutkay2005mra}Dutkay, D., Bildea, S. \& Picioroaga, G. MRA superwavelets. {\em New York Journal Of Mathematics}. \textbf{11} pp. 1-19 (2005)

\bibitem{fowler2002wavelet}Fowler, J. \& Hua, L. Wavelet transforms for vector fields using omnidirectionally balanced multiwavelets. {\em IEEE Transactions On Signal Processing}. \textbf{50}, 3018-3027 (2002)
\bibitem{FranckLarson2002}Franck, M. \& Larson, D. Frames in Hilbert $C^*$-modules and $C^*$-algebras. {\em Journal Of Operator Theory}. (2002)






\bibitem{JakobsenLuef2020}Jakobsen, M. \& Luef, F. Duality of Gabor frames and Heisenberg modules. {\em Journal Of Noncommutative Geometry}. \textbf{14}, 1445-1500 (2020)


\bibitem{lance1995hilbert}Lance, E. Hilbert C*-modules: a toolkit for operator algebraists. (Cambridge University Press,1995)

\bibitem{lemarie1986ondelettes}Lemarié, P. \& Meyer, Y. Ondelettes et bases hilbertiennes. {\em Rev. Mat. Iberoamericana}. \textbf{2}, 1-18 (1986)



\bibitem{mallat1989multiresolution}Mallat, S. Multiresolution approximations and wavelet orthonormal bases of $L^2(\mathbb{R})$. {\em Transactions Of The American Mathematical Society}. \textbf{315}, 69-87 (1989)


\bibitem{mallat1999wavelet}Mallat, S. A wavelet tour of signal processing. (Elsevier,1999)

\bibitem{manuilov2000hilbert}Manuilov, V. \& Troitsky, E. Hilbert C*-and W*-modules and their morphisms. {\em Journal Of Mathematical Sciences}. \textbf{98}, 137-201 (2000)


\bibitem{strang1996wavelets}Strang, G. \& Nguyen, T. Wavelets and filter banks. (SIAM,1996)

\bibitem{shukla2018super}Shukla, Niraj K., and Saurabh Chandra Maury. "Super‐wavelets on local fields of positive characteristic." Mathematische Nachrichten 291.4 (2018): 704-719.

\bibitem{wood2004wavelets}Wood, P. Wavelets and Hilbert modules. {\em Journal Of Fourier Analysis And Applications}. \textbf{10}, 573-598 (2004)



\bibitem{xia1996vector}Xia, X. \& Suter, B. Vector-valued wavelets and vector filter banks. {\em IEEE Transactions On Signal Processing}. \textbf{44}, 508-518 (1996)



\end{thebibliography}
\end{document}